\makeatletter \@addtoreset{equation}{section}
\begin{document}

\linespread{1.3}

\newcommand{\E}{\mathbb{E}}
\newcommand{\PP}{\mathbb{P}}
\newcommand{\RR}{\mathbb{R}}
\newcommand{\NN}{\mathbb{N}}

\newtheorem{theorem}{Theorem}[section]
\newtheorem{lemma}[theorem]{Lemma}
\newtheorem{coro}[theorem]{Corollary}
\newtheorem{defn}[theorem]{Definition}
\newtheorem{assp}[theorem]{Assumption}
\newtheorem{expl}[theorem]{Example}
\newtheorem{prop}[theorem]{Proposition}
\newtheorem{rmk}[theorem]{Remark}
\newtheorem{notation}[theorem]{Notation}

\newcommand\tq{{\scriptstyle{3\over 4 }\scriptstyle}}
\newcommand\qua{{\scriptstyle{1\over 4 }\scriptstyle}}
\newcommand\hf{{\textstyle{1\over 2 }\displaystyle}}
\newcommand\hhf{{\scriptstyle{1\over 2 }\scriptstyle}}

\newcommand{\eproof}{\indent\vrule height6pt width4pt depth1pt\hfil\par\medbreak}

\def\a{\alpha} \def\g{\gamma}
\def\e{\varepsilon} \def\z{\zeta} \def\y{\eta} \def\o{\theta}
\def\vo{\vartheta} \def\k{\kappa} \def\l{\lambda} \def\m{\mu} \def\n{\nu}
\def\x{\xi}  \def\r{\rho} \def\s{\sigma}
\def\p{\phi} \def\f{\varphi}   \def\w{\omega}
\def\q{\surd} \def\i{\bot} \def\h{\forall} \def\j{\emptyset}

\def\be{\beta} \def\de{\delta} \def\up{\upsilon} \def\eq{\equiv}
\def\ve{\vee} \def\we{\wedge}

\def\t{\tau}

\def\F{{\cal F}}
\def\T{\tau} \def\G{\Gamma}  \def\D{\Delta} \def\O{\Theta} \def\L{\Lambda}
\def\X{\Xi} \def\S{\Sigma} \def\W{\Omega}
\def\M{\partial} \def\N{\nabla} \def\Ex{\exists} \def\K{\times}
\def\V{\bigvee} \def\U{\bigwedge}

\def\1{\oslash} \def\2{\oplus} \def\3{\otimes} \def\4{\ominus}
\def\5{\circ} \def\6{\odot} \def\7{\backslash} \def\8{\infty}
\def\9{\bigcap} \def\0{\bigcup} \def\+{\pm} \def\-{\mp}
\def\<{\langle} \def\>{\rangle}

\def\lev{\|} \def\rev{\|}
\def\1{\mathbf{1}}

\def\tl{\tilde}
\def\trace{\hbox{\rm trace}}
\def\diag{\hbox{\rm diag}}
\def\for{\quad\hbox{for }}
\def\refer{\hangindent=0.3in\hangafter=1}

\newcommand\wD{\widehat{\D}}

\pagestyle{plain}

\title
{ \bf Strong convergence and stability of implicit numerical methods for 
 stochastic differential equations with non-globally Lipschitz continuous coefficients.}

\author{ Xuerong Mao%
        \thanks{%
                  Department of Mathematics and Statistics,
                  University of Strathclyde,
                   Glasgow, G1 1XH, Scotland, UK
                  (\texttt{x.mao@strath.ac.uk}).
                 }
\and
Lukasz Szpruch%
          \thanks{%
                  Mathematical Institute,
                  University of Oxford,
                   Oxford, OX1 3LB, UK
                  (\texttt{lukas.szpruch@maths.ox.ac.uk}).
                  }
}

\date{}

\maketitle

\thispagestyle{empty}


\begin{abstract}
\textsf{\em We are interested in the strong convergence and almost sure stability of 
Euler-Maruyama (EM) type approximations to the solutions of
stochastic differential equations (SDEs) with non-linear and non-Lipschitzian
coefficients. Motivation comes from finance and biology where many widely applied models do not satisfy the standard
assumptions required for the strong convergence. In addition we examine the globally almost surely asymptotic
stability in this non-linear setting for EM type schemes. In particular, we present a stochastic counterpart of the discrete
LaSalle principle from which we deduce stability properties for numerical methods.}


\medskip
\noindent \textsf{{\bf Key words: } \em Dissipative model, super-linear growth,
stochastic differential equation, strong convergence,  backward Euler-Maruyama scheme, implicit method, 
LaSalle principle, non-linear stability, almost sure stability.}

\medskip
\noindent{\small\bf AMS Subject Clasification: } 65C30,\;65L20,\;60H10

\end{abstract}

\section{Introduction}

Throughout this paper, let $(\Omega, {\mathcal{F}},\{{\mathcal{F}}_t\}_{t\geq 0}, \PP)$ be a
complete probability space with a filtration
$\{{\mathcal{F}}_t\}_{t\geq 0}$ satisfying the usual conditions (that is to say,
it is right continuous and increasing while
${\mathcal{F}}_0$ contains all $\PP$-null sets).  Let $w(t)=(w_{1}(t),...,w_{d}(t))^{T}$
be a $d$-dimensional Brownian motion defined on the probability space, where $T$ denotes
the transpose of a vector or a matrix.
In this paper we study the numerical approximation of the stochastic differential equations (SDEs)
\begin{equation}   \label{eq:SDE}
dx(t)=f(x(t))dt+g(x(t))dw(t).
\end{equation}
Here $x(t)\in \RR^{n}$ for each $t\ge 0$ and $f:\RR^{n}\rightarrow\RR^{n}$
and $g:\RR^{n}\rightarrow \RR^{n\times d}$. For
simplicity we assume that $x_{0}\in \RR^{n}$ is a deterministic vector.
Although the method of Lyapunov functions allows us to show that there are solutions to
 a very wide family of SDEs (see e.g. \cite{chas1980stochastic,mao2007stochastic}), in general,
both the explicit solutions and the probability distributions of
the solutions are not known.  We therefore consider computable discrete
approximations that, for example, could be used in Monte Carlo simulations.
Convergence and stability
of these methods are well understood for SDEs with Lipschitz continuous
coefficients: see \cite{kloeden1992numerical} for example.
Our primary objective is to study classical strong convergence and stability questions for numerical
approximations in the case where $f$ and $g$
are not globally Lipschitz continuous.
A good motivation for our work is an instructive conditional result of Higham et al. \cite{higham2003strong}.
Under the local Lipschitz condition, they proved that
uniform boundedness of moments of both the
solution to \eqref{eq:SDE} and its approximation are sufficient for strong
convergence. That immediately raises the question of what type of conditions
can guarantee such a uniform boundedness of moments. It is well known that
the classical linear growth condition is sufficient to bound the
moments for both the true solutions and their EM approximation \cite{kloeden1992numerical, mao2007stochastic}. It is
also known that when we try to bound the moment of the true solutions,  a useful way
to relax the linear growth condition is to apply the Lyapunov-function technique,
with $V(x)=\lev x\rev^{2}$. This leads us to the monotone condition \cite{mao2007stochastic}.
More precisely, if there exist constants $\a,\be > 0$ such that
the coefficients of equation \eqref{eq:SDE} satisfy
\begin{equation}
\<x,f(x)\>+\frac{1}{2}\lev g(x)\rev^{2}\le \a + \be\lev x\rev^2 \quad \mathrm{~for~all~~}x \in\RR^{n},
    \label{int:mono}
\end{equation}
then
\begin{equation} \label{eq:bex}
\sup_{0\leq t\leq T} \E\lev x(t)\rev ^{2}< \8 \quad \forall T > 0.
\end{equation}
Here, and throughout, $\lev x \rev $ denotes both the Euclidean vector norm and the Frobenius
matrix norm and $\<x,y\>$ denotes the scalar product of vectors $x,y\in\RR^{n}$.
However, to the best of our knowledge there is no result on the moment bound for the numerical solutions of SDEs
under the monotone condition (\ref{int:mono}).
Additionally, Hutzenthaler et al. \cite{hutzenthaler2011strong} proved that in the
case of super-linearly growing coefficients,
the EM approximation
may not converge in the strong $L^{p}$-sense nor in the
 weak sense
to the exact solution.
For example, let us consider   a non-linear SDE
\begin{equation}\label{eq:ex}
 dx(t) = (\mu -\a x(t)^3 ) dt + \be x(t)^2 dw(t),
\end{equation}
where $\mu,\a,\be\ge0$ and $\a>\frac{1}{2}\be^{2}$.
In order to approximate SDE \eqref{eq:ex} numerically, for any $\D t$, we define
the partition  $\mathcal{P}_{\D t}:=\{t_{k}=k\Delta t :k=0,1,2,\ldots,N\}$ of the time interval
$[0,T]$, where $N\D t=T$ and $T>0$. Then we define the EM approximation $Y_{t_{k}} \approx x(t_{k})$
of \eqref{eq:ex} by
\begin{equation} \label{eq:int:EMex}
Y_{t_{k+1}} = Y_{t_{k}} +( \mu -\a Y_{t_{k}}^{3}) \D t
                 +\be Y_{t_{k}}^{2}\D w_{t_{k}},
\end{equation}
where $\D w_{t_{k}} = w(t_{k+1})-w(t_{k})$.
It was shown in \cite{hutzenthaler2011strong} that
\[
\lim_{\D t \rightarrow 0}\E \lev Y_{t_{N}}\rev^{2} = \8.
\]
On the other hand, the coefficients of \eqref{eq:ex} satisfy the monotone condition \eqref{int:mono}
so \eqref{eq:bex} holds.
Hence, Hutzenthaler et al. \cite{hutzenthaler2011strong} concluded that
\begin{align*}
 \lim_{\D t \rightarrow 0}\E \lev x(T)- Y_{t_{N}} \rev^{2}=\8.
\end{align*}
It is now clear that to prove the strong convergence theorem under condition \eqref{int:mono} it is
necessary to modify
the EM scheme.
Motivated by the existing works \cite{higham2003strong} and \cite{hu1996semi} we consider
implicit schemes. These authors have
demonstrated that a backward Euler-Maruyama (BEM) method strongly converges to
the solution of the SDE with one-sided Lipschitz drift and linearly growing diffusion
coefficients.  So far, to the best of our knowledge, most of the existing results on
the strong convergence for numerical schemes only 
cover SDEs where the diffusion coefficients have at
most linear growth \cite{berkaoui2007euler,mao2007approximations,higham2005convergence,hutzenthaler2010strong,kloeden1992numerical}.
However, the problem remains essentially unsolved for the important class of SDEs with
super-linearly growing diffusion coefficients.
We are interested in relaxing  the conditions for the diffusion coefficients to
justify Monte Carlo simulations for highly non-linear systems that arise in financial mathematics,
\cite{ahn1999parametric,campbell1998econometrics,ait1996testing,chan1992empirical,heston1997simple,lewis2000option},
 for example
\begin{equation}
dx(t)=(\mu -\a x^{r}(t))dt+\be x^{\rho}(t)dw(t), \quad r,\rho>1,
\end{equation}
where $\mu,\a,\be>0$, or in stochastic population dynamics
 \cite{mao2002environmental,bahar2004stochastic,mao2003asymptotic,pang-asymptotic,gard1988introduction},
 for example
 \begin{equation} \label{eq:Lotka-Voltera}
dx(t)=\diag(x_{1}(t),x_{2}(t),...,x_{n}(t))[(b+A x^{2}(t))dt+x(t)dw(t)],
\end{equation}
where $b=(b_{1},\ldots,b_{n})^{T}$, $x^{2}(t)= (x^{2}_{1}(t),\ldots,x^{2}(t)_{n})^{T}$ and matrix $A=[a_{ij}]_{1\le i,j \le n}$
is such that $\l_{max} (A + A^{T}) <0 $, where $\l_{max}(A)= \sup_{x\in\RR^{n}, \lev x \rev=1}x^{T}Ax$.
The only results we know, where the strong convergence of the numerical approximations
was considered for super-linear diffusion, is Szpruch et al. \cite{szpruchnumerical} and Mao and Szpruch \cite{szpruch-diss}.
In \cite{szpruchnumerical} authors have considered the BEM approximation for the following
scalar SDE which arises in finance \cite{ait1996testing},
\begin{equation*}
dx(t)=(\alpha _{-1}x(t)^{-1}-\alpha _{0}+\alpha
_{1}x(t)-\alpha_{2}x(t)^{r})dt+ \sigma x(t)^{\rho }dw(t) \quad r,\rho>1.
\end{equation*}
In \cite{szpruch-diss}, this analysis was extended to the multidimensional case under specific conditions
for the drift and  diffusion coefficients.  
In the present paper, we aim to prove strong convergence under general monotone condition \eqref{int:mono} in a multidimensional setting.
We believe that this condition is optimal for boundedness of moments of the implicit schemes.  
 The reasons that we are interested in the strong convergence are: a) the efficient variance reduction
 techniques, for example, the
multi-level Monte Carlo simulations  \cite{giles2008multilevel},  rely on the strong convergence properties;
 b) both weak convergence \cite{kloeden1992numerical}
and pathwise convergence \cite{kloeden2007pathwise} follow automatically.

Having established the strong convergence result we will proceed to the stability analysis of
the underlying numerical scheme for the non-linear SDEs (\ref{eq:SDE}) under the monotone-type condition.
The main problem concerns the propagation of an error during the simulation of an approximate path. If the numerical
scheme is not stable, then the simulated path may diverge substantially from the exact solution in
practical simulations.
Similarly, the expectation of the functional estimated by a Monte Carlo simulation may be significantly different
from that
of the expected functional of the underlying SDEs due to numerical instability.
Our aim here is to investigate almost surely asymptotic properties of the numerical schemes
for SDEs (\ref{eq:SDE}) via a stochastic
 version of the LaSalle principle.  LaSalle, \cite{lasalle1968stability}, improved
significantly the Lyapunov stability method for ordinary differential equations. Namely, he developed methods for
locating limit sets of nonautonomous systems \cite{hale1993introduction,lasalle1968stability}. The first stochastic
counterpart of his great achievement was
established by Mao \cite{mao1999stochastic} under the local Lipschitz and linear growth conditions.
Recently, this stochastic version was generalized by
Shen et al. \cite{shen2006improved}
to cover stochastic functional differential equations with locally Lipschitz continuous coefficients.
Furthermore, it is well known  that there exists a counterpart of the invariance principle for
discrete dynamical systems \cite{lasalle1976stability}. However,
there is no discrete stochastic counterpart of Mao's version of the LaSalle theorem.
In this work we investigate a special
case, with the Lyapunov function $V(x)=\lev x \rev^2$, of the LaSalle theorem. We shall show that the 
almost sure global stability can be easily
deduced from our results.
The primary objectives in our stability analysis are:  \vspace{-4mm}
\begin{itemize} 
  \item Ability to cover highly nonlinear SDEs; \vspace{-2mm}
  \item Mild assumption on the time step - $A(\a)$-stability concept \cite{higham2000stability}.  \vspace{-2mm}
\end{itemize} 
Results which investigate stability analysis for numerical methods can be found
in Higham \cite{higham2001mean,higham2000stability} for  the scalar
linear case, Baker et al. \cite{baker2005exponential} for the global Lipschitz and
 Higham et al. \cite{higham2003exponential} for one-sided
Lipschitz  and the linear growth condition. 
\newline
At this point, it is worth mentioning how our
work compares with that of Higham et al. \cite{higham2003strong}.
Theorem 3.3 in their paper is a very important contribution to the numerical SDE theory.
The authors proved strong convergence results for one-sided Lipschitz
and the linear growth condition on drift and diffusion coefficients, respectively.
What differentiates our work from \cite{higham2003strong} are:
a) We significantly relax the linear growth constraint on the diffusion coefficient and
we only ask for very general monotone type growth;
b) Our analysis is based on a more widely applied BEM scheme
 in contrast to the split-step scheme introduced in their paper.
An interesting alternative to the implicit schemes for numerical approximations of SDEs 
with non-globally Lipschitz drift coefficient recently appeared in \cite{hutzenthaler2010strong}. 
However the stability properties of this method are not analysed.  

In what follows, for notational simplicity, we use the convention that $C$ represent
 a generic positive constant independent of $\D t$, the value of which may be different for different appearances.

The rest of the paper is arranged as follows.
In section 2, we introduce the monotone condition under which we prove the existence of a unique solution to
equation (\ref{eq:SDE}), along with appropriate bounds that will be needed in further analysis.
In Section 3 we propose the $\o$-EM scheme, which  is known
as the BEM when $\o=1$, to approximate the solution of
equation (\ref{eq:SDE}). We show that
the $2nd$ moment of the $\o$-EM, can be bounded under
the monotone condition plus some mild assumptions on $f$ and $g$. In Section 4 we
introduce a new numerical method, which we call
the forward-backward Euler-Maruyama (FBEM). The FBEM scheme enables us to
overcome some measurability difficulties and avoid using Malliavin calculus. We
demonstrate that both the FBEM and the $\o$-EM do not differ much in the $L^{p}$-sense.
Then we prove a strong convergence theorem on a compact domain that is later
extended to the whole domain. We also perform
a numerical experiment that confirms our theoretical
results.
Section 5 contains the stability
analysis, where we prove a special
case of the stochastic
LaSalle theorem for discrete time processes.
\section{Existence and Uniqueness of Solution}
We require the coefficients $f$ and $g$ in \eqref{eq:SDE} to be locally
 Lipschitz continuous and to satisfy the monotone condition, that is
\begin{assp} \label{a0}
Both coefficients $f$ and $g$ in \eqref{eq:SDE} satisfy the following conditions:\\
\textit{\underline{Local Lipschitz condition}.}
For each integer $m\geq1$, there is a positive constant $C(m)$ such
that
\begin{equation*}
\lev f(x)-f(y) \rev +\lev g(x)-g(y) \rev \leq C(m)\lev x-y \rev
\end{equation*}
for those $x, y \in \RR^{n}$ with $\lev x \rev\vee \lev y\rev \leq m.$\\
\textit{\underline{Monotone condition}.} There exist constants $\a$ and $\be$ such that
\begin{equation} \label{as1}
\<x,f(x)\> +\frac{1}{2}\lev g(x)\rev^{2} \le \a+\be\lev x\rev^{2}
\end{equation}
for all $x\in\RR^{n}$.
\end{assp}
It is a classical result that under Assumption \ref{a0}, there
exists a unique solution to \eqref{eq:SDE} for any given initial value
$x(0)=x_{0}\in\RR^{n}$,  \cite{friedman1976,mao2007stochastic}. The reason why we
 present the following theorem with a proof here  is that
it reveals the upper bound for the probability that the process $x(t)$ stays on a
compact domain for finite time $T>0$. The bound will
be used to derive the main convergence theorem of this paper.
\begin{theorem} \label{Existence}
Let Assumption \ref{a0} hold. Then for any
given initial value $x(0)=x_{0}\in\RR^{n}$, there exists a unique, global
solution $\{x(t)\}_{t\ge0}$ to equation (\ref{eq:SDE}). Moreover, the solution has the properties that for any $T>0$,
\begin{equation}  \label{2.9}
\E\lev x(T)\rev ^2< ( \lev x_{0} \rev^{2}+2\a T )\exp(2\be T),
\end{equation}
and
\begin{equation} \label{eq:lim}
\PP(\tau _{m}\leq T)
\leq
\frac{( \lev x_{0} \rev^{2}+2\a T )\exp(2\be T)}{m^2},
\end{equation}
where $m$ is any positive integer and
\begin{equation} \label{eq:stop}
\tau _{m}=\inf \{t\ge 0 :\quad \lev x(t)\rev>m \}.
\end{equation}
\end{theorem}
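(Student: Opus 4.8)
The plan is to combine the classical truncation construction of a local solution with an a priori second-moment estimate obtained from the monotone condition \eqref{as1}; this single estimate will simultaneously deliver non-explosion (hence global existence), the probability bound \eqref{eq:lim}, and the moment bound \eqref{2.9}. First I would use the local Lipschitz condition to truncate: for each integer $m$ define coefficients $f_m,g_m$ that coincide with $f,g$ on $\{\lev x\rev\le m\}$ and are globally Lipschitz on $\RR^n$, so that standard theory furnishes a unique global solution of the truncated SDE. By uniqueness these truncated solutions agree up to their respective exit times, and so patch together into a unique process $x(t)$ defined up to the explosion time $\tau_\8:=\lim_{m\to\8}\tau_m$, with $\tau_m$ as in \eqref{eq:stop}. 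Everything then reduces to showing $\tau_\8=\8$ a.s.

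The heart of the argument is Itô's formula applied to $V(x)=\lev x\rev^2$, which gives
\begin{equation*}
d\lev x(t)\rev^2 = \left(2\<x(t),f(x(t))\> + \lev g(x(t))\rev^2\right)dt + 2\<x(t),g(x(t))\,dw(t)\>.
\end{equation*}
I would integrate up to the stopped time $t\wedge\tau_m$ and take expectations. The stochastic integral is a martingale with zero expectation, since for $s\le\tau_m$ its integrand is bounded (there $\lev x(s)\rev\le m$ and $g$ is continuous), while the monotone condition \eqref{as1} bounds the drift by $2\a+2\be\lev x\rev^2$. This yields
\begin{equation*}
\E\lev x(t\wedge\tau_m)\rev^2 \le \lev x_0\rev^2 + 2\a T + 2\be\int_0^t \E\lev x(s\wedge\tau_m)\rev^2\,ds, \qquad 0\le t\le T,
\end{equation*}
and Gronwall's inequality then delivers $\E\lev x(t\wedge\tau_m)\rev^2 \le (\lev x_0\rev^2+2\a T)\exp(2\be T)$, a bound that is uniform in both $t\le T$ and $m$.

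From this uniform estimate the remaining claims follow quickly. By continuity of paths, $\lev x(\tau_m)\rev=m$ on $\{\tau_m\le T\}$, so $\lev x(T\wedge\tau_m)\rev^2 \ge m^2\,\1_{\{\tau_m\le T\}}$ and therefore
\begin{equation*}
m^2\,\PP(\tau_m\le T) \le \E\left[\lev x(T\wedge\tau_m)\rev^2\right] \le (\lev x_0\rev^2+2\a T)\exp(2\be T),
\end{equation*}
which is precisely \eqref{eq:lim}. Letting $m\to\8$ forces $\PP(\tau_\8\le T)=0$ for every $T$, hence $\tau_\8=\8$ a.s. and the solution is global. Finally, since $t\wedge\tau_m\to T$ a.s. as $m\to\8$, Fatou's lemma transports the uniform bound to the limit and gives \eqref{2.9}. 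The one genuinely delicate point is the non-explosion step: the whole scheme hinges on the stopped moment estimate being uniform in $m$, and this is exactly what the monotone condition buys. Without a one-sided coercivity control of this type, the $\lev g\rev^2$ term could not be absorbed into $2\a+2\be\lev x\rev^2$, the Gronwall bound would break down, and the $1/m^2$ decay underlying both global existence and \eqref{eq:lim} would be lost.
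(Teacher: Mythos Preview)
Your proposal is correct and follows essentially the same route as the paper: apply It\^o's formula to $\lev x\rev^2$, use the monotone condition to bound the generator by $2\a+2\be\lev x\rev^2$, stop at $\tau_m$, apply Gronwall, then read off \eqref{eq:lim} from $\lev x(\tau_m)\rev^2\ge m^2$ on $\{\tau_m\le T\}$ and pass to the limit via Fatou for \eqref{2.9}. The only cosmetic difference is that you spell out the truncation construction of the local solution, whereas the paper simply cites standard references for existence and uniqueness.
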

\begin{proof} It is well known that under Assumption \ref{a0},  for any given initial
 value $x_0\in\RR^{n}$ there exists a unique
solution $x(t)$ to  the SDE (\ref{eq:SDE}), \cite{friedman1975stochastic,mao2007stochastic}.
Therefore we only need to prove that (\ref{2.9}) and (\ref{eq:lim}) hold.
Applying the It\^{o}
formula to the function $V(x,t)=\lev x\rev^{2}$,  we compute the diffusion
operator
\begin{align*} 
LV(x,t)
    &=  2\Big(\<x,f(x)\> +\frac{1}{2}
           \lev g(x)\rev^{2} \Big).
           \end{align*}
By Assumption \ref{a0}
\begin{equation} \label{const}
LV(x,t)\leq 2\a+2\be\lev x \rev^{2}.
\end{equation}
Therefore
\begin{equation*}
\E  \lev x(t\wedge \tau _{m})\rev ^{2}
\leq
  \lev x_{0} \rev^{2}+2\a T+\int_{0}^{t}2\be\E  \lev x(s\wedge \tau _{m}) \rev^{2}ds.
\end{equation*}
The Gronwall inequality gives
\begin{equation} \label{eq:bb}
\E  \lev x(T\wedge \tau _{m})\rev ^{2}
\leq
  ( \lev x_{0} \rev^{2}+2\a T )\exp(2\be T).
\end{equation}
Hence
\begin{equation*}
\PP(\tau _{m}\leq T)m^2
\leq
[ \lev x_{0} \rev^{2}+2\a T ]\exp(2\be T).
\end{equation*}
Next, letting $m\rightarrow \infty$  in \eqref{eq:bb}
and applying Fatou's lemma, we obtain
\begin{equation*}
\E\lev x(T)\rev ^{2} \leq  [ \lev x_{0} \rev^{2}+2\a T ]\exp(2\be T),
\end{equation*}
which gives the  other assertion (\ref{2.9}) and completes the proof.
\end{proof}
\section{The $\o$-Euler-Maruyama Scheme}
As indicated in the introduction, in order to approximate the solution of \eqref{eq:SDE} we will use the
$\o$-EM scheme.
Given any step size $\Delta
t$,  we define a partition $\mathcal{P}_{\Delta
t}:=\{t_{k}=k\Delta t :k=0,1,2,...\}$ of the half line
$[0,\8)$, and define
\begin{equation}\label{oEM}
X_{t_{k+1}}=X_{t_{k}}+\o f(X_{t_{k+1}}) \D t + (1-\o)f(X_{t_{k}}) \D t  + g(X_{t_{k}})\Delta w_{t_{k}},
\end{equation}
where $\Delta w_{t_{k}}=w_{t_{k+1}}-w_{t_{k}}$ and
$X_{t_{0}}=x_{0}$.
The additional parameter $\o \in [0,1]$ allows us to control the implicitness of the 
numerical scheme, that may lead to various
asymptotic behaviours of equation \eqref{oEM}.
For technical reasons we always require $\o \ge 0.5$.

Since we are dealing with an implicit scheme we need to make sure that  equation \eqref{oEM} has a
unique  solution $X_{t_{k+1}}$ given $X_{t_{k}}$.
To prove this, in addition to Assumption \ref{a0}, we
ask that function $f$ satisfies the one-sided Lipschitz condition.
\begin{assp} \label{os_lip}
\textit{One-sided Lipschitz condition.} There exists a constant $L>0$ such that
\begin{equation*}
\<x-y , f(x)-f(y) \>\le L \lev x-y\rev^{2} \quad \forall x,y \in \RR^{n}.
\end{equation*}
\end{assp}
It follows from the fixed point theorem that a unique solution $X_{t_{k+1}}$ to
equation (\ref{oEM}) exists given $X_{t_{k}}$, provided $\D t < \frac{1}{\o L}$, (see \cite{szpruch-diss} for more 
details). From now on we
always assume that $\D t < \frac{1}{\o L}$.
In order to implement numerical scheme \eqref{oEM} we define a function $F:\RR^{n}\rightarrow \RR^{n}$ as
\begin{equation} \label{eq:F}
 F(x) = x  - \o f(x)\D t.
\end{equation}
Due to Assumption \ref{os_lip}, there exists an inverse function
$F^{-1}$ and the solution to \eqref{oEM} can be represented in the following form
\[
 X_{t_{k+1}} = F^{-1} ( X_{t_{k}} + (1-\o)f(X_{t_{k}}) \D t  + g(X_{t_{k}})\Delta w_{t_{k}} ).
\]
Clearly, $X_{t_{k}}$ is $\F_{t_{k}}$-measurable.  In many applications,
the drift coefficient of the SDEs has a cubic or quadratic form, whence the inverse function
can be found explicitly. For more complicated SDEs we can find the inverse function $F^{-1}$ using
root-finding algorithms, such as Newton's method.
\subsection{Moment Properties of $\o$-EM}
In this section we show that the second moment of the $\o$-EM \eqref{oEM} is bounded (Theorem \ref{TL2}).
To achieve the bound we employ the stopping
time technique, in a similar way as in the proof of Theorem \ref{Existence}.
However, in discrete time approximations for a stochastic process, the problem of
overshooting the level where we would like to stop the
process  appears, \cite{buchmann2005simulation,broadie1997continuity,mannella1999absorbing}.

Due to the implicitness of  scheme \eqref{oEM}, an additional but mild restriction
on the time step appears.  That is,  from now on,  we require $\D t \le \D t^{*}$, where
$\D t^* \in (0, (max\{L,2\be\}\o)^{-1})$ with $\be$ and $L$  defined in Assumptions \ref{a0} and \ref{os_lip}, respectively.

The following lemma shows that in order to guarantee the boundedness of moments for
$X_{t_{k}}$ defined by \eqref{oEM} it is enough to bound the moments of
$F(X_{t_{k}})$, where $F$ is defined by \eqref{eq:F}.
\begin{lemma} \label{lem:F}
 Let Assumption \ref{a0} hold. Then for $F(x) = x  - \o f(x)\D t$ we have
\[
 \lev x \rev^{2} \le ( 1- 2 \be \o \D t)^{-1} \left[ \lev F(x) \rev^{2} + 2\o\a\D t\right] \quad \forall x \in \RR^{n}.
\]
\end{lemma}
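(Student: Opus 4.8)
The plan is to read the inequality directly off the algebraic identity for $\lev F(x)\rev^{2}$ together with the monotone condition \eqref{as1}. Writing $F(x)=x-\o f(x)\D t$ and expanding the square gives
\begin{equation*}
\lev F(x)\rev^{2}=\lev x\rev^{2}-2\o\D t\,\<x,f(x)\>+\o^{2}(\D t)^{2}\lev f(x)\rev^{2}.
\end{equation*}
The term $\o^{2}(\D t)^{2}\lev f(x)\rev^{2}$ is nonnegative, so I would discard it to obtain a lower bound for $\lev F(x)\rev^{2}$ in which only the cross term $-2\o\D t\,\<x,f(x)\>$ remains to be controlled.

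To control that cross term I would invoke Assumption \ref{a0}. Since $\tfrac12\lev g(x)\rev^{2}\ge0$, the monotone condition \eqref{as1} yields $\<x,f(x)\>\le\a+\be\lev x\rev^{2}$, and because $-2\o\D t<0$ this reverses to $-2\o\D t\,\<x,f(x)\>\ge -2\o\D t\big(\a+\be\lev x\rev^{2}\big)$. Substituting into the lower bound and collecting the $\lev x\rev^{2}$ terms gives
\begin{equation*}
\lev F(x)\rev^{2}+2\o\a\D t\ge(1-2\be\o\D t)\lev x\rev^{2}.
\end{equation*}

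The only real subtlety is that I must divide by the factor $1-2\be\o\D t$, so I need it to be strictly positive. This is exactly where the standing step-size restriction $\D t\le\D t^{*}<(\max\{L,2\be\}\o)^{-1}$ enters: it forces $2\be\o\D t<1$ when $\be>0$ (and if $\be\le0$ the factor is automatically at least $1$), so $1-2\be\o\D t>0$ in all cases. Dividing through then gives the claimed bound. I do not expect any genuine obstacle; the argument is a one-line expansion followed by the monotone estimate, and the whole content is the observation that the step-size condition keeps the leading coefficient positive.
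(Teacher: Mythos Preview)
Your proposal is correct and follows essentially the same route as the paper: expand $\lev F(x)\rev^{2}$, drop the nonnegative $\o^{2}\D t^{2}\lev f(x)\rev^{2}$ term, apply the monotone bound $\<x,f(x)\>\le\a+\be\lev x\rev^{2}$, and rearrange. The paper's proof is just a terser version of exactly this computation, with the positivity of $1-2\be\o\D t$ left implicit from the standing step-size restriction.
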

\begin{proof}
 Writing $\lev F(x) \rev^{2} = \< F(x),F(x) \>  $ and using Assumption \ref{a0} we arrive at
\begin{align*}
 \lev F(x) \rev^{2} = & \lev x \rev^{2} - 2\o \<x,f(x)\> \D t + \o^{2}\lev f(x)\rev^{2}\D t^{2} \\
                  \ge & ( 1- 2 \be \o \D t) \lev x \rev^{2} - 2\o\a\D t, 
\end{align*}
and the assertion follows.
\end{proof}
We define the stopping time $\l_{m}$ by
\begin{equation} \label{sd}
\lambda _{m}=\inf \{k: \lev X_{t_{k}}\rev > m \}.
\end{equation}
We observe that when $k \in[0,\l_{m}(\w)]$,
$\lev X_{t_{k-1}}(\w) \rev \le m$,
but we may have  $\lev X_{t_{k}} (\w)   \rev > m$,
so the following lemma is not trivial.
\begin{lemma} \label{stopping}
Let Assumptions \ref{a0}, \ref{os_lip} hold, and $\o\ge 0,5$. Then for $p\geq 2$ 
and sufficiently large integer $m$, there exists a
constant $C(p,m)$, such that
\begin{equation*}
\E\left[\lev X_{t_{k}}\rev ^{p}\1_{[0,\lambda
_{m}]}(k)\right]< C(p,m) \quad \mathrm{~~for~any~~}k\ge0.
\end{equation*}
\end{lemma}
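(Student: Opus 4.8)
The plan is to exploit the observation made just before the statement: the stopping level can be overshot by at most one step, so that although $\lev X_{t_k}\rev$ need not be bounded by $m$ on $\{k\le\lambda_m\}$, the previous iterate $X_{t_{k-1}}$ is. First I would record the measurability fact
\[
\{k\le\lambda_m\}=\{\lev X_{t_j}\rev\le m,\ j=0,\dots,k-1\}\in\F_{t_{k-1}},
\]
so that $\1_{[0,\lambda_m]}(k)$ is $\F_{t_{k-1}}$-measurable and on this event $\lev X_{t_{k-1}}\rev\le m$. The case $k=0$ is immediate since $X_{t_0}=x_0$ is deterministic, so from now on I would take $k\ge1$.

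Next I would rewrite the scheme through $F$. Combining \eqref{oEM} and \eqref{eq:F} gives
\[
F(X_{t_k})=X_{t_{k-1}}+(1-\o)f(X_{t_{k-1}})\D t+g(X_{t_{k-1}})\D w_{t_{k-1}},
\]
which expresses $F(X_{t_k})$ \emph{explicitly} in terms of the bounded quantity $X_{t_{k-1}}$ and the increment $\D w_{t_{k-1}}$. On $\{k\le\lambda_m\}$ the local Lipschitz condition in Assumption \ref{a0} (applied with $y=0$) yields $\lev f(X_{t_{k-1}})\rev\le K_1(m)$ and $\lev g(X_{t_{k-1}})\rev\le K_2(m)$ for constants depending only on $m$, so that on this event
\[
\lev F(X_{t_k})\rev\le m+(1-\o)K_1(m)\D t^*+K_2(m)\lev\D w_{t_{k-1}}\rev .
\]

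The central device is then Lemma \ref{lem:F}, which converts this explicit bound on $\lev F(X_{t_k})\rev$ into a bound on $\lev X_{t_k}\rev$ and thereby circumvents the implicitness, since $X_{t_k}$ itself is only defined via $F^{-1}$. Writing $\gamma:=(1-2\be\o\D t)^{-1}$, which is finite by the standing step-size restriction $\D t\le\D t^*<(\max\{L,2\be\}\o)^{-1}$, Lemma \ref{lem:F} gives $\lev X_{t_k}\rev^2\le\gamma(\lev F(X_{t_k})\rev^2+2\o\a\D t)$. Raising to the power $p/2\ge1$ and using $(a+b)^{p/2}\le2^{p/2-1}(a^{p/2}+b^{p/2})$ together with the pointwise estimate above, I would take expectations. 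Because $\1_{[0,\lambda_m]}(k)$ is $\F_{t_{k-1}}$-measurable while $\D w_{t_{k-1}}$ is independent of $\F_{t_{k-1}}$ with $\E\lev\D w_{t_{k-1}}\rev^p=C\,\D t^{p/2}$, every resulting term is a finite constant depending only on $p$ and $m$, and crucially independent of $k$, since the increment has the same law at every step. This delivers the bound $C(p,m)$.

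The step I expect to be the main obstacle is the conceptual one flagged in the text: the overshoot prevents controlling $\lev X_{t_k}\rev$ directly by $m$ on $\{k\le\lambda_m\}$. The resolution is to control $X_{t_{k-1}}$ instead and to absorb the unbounded overshoot through a single Brownian increment, whose $p$-th moment is finite and of order $\D t^{p/2}$; Lemma \ref{lem:F} is precisely what makes this work despite the implicit update. The only technical points to check are that $\gamma$ stays bounded (guaranteed by $\D t\le\D t^*$, with $\gamma\le1$ when $\be\le0$) and that the local Lipschitz constants $K_1(m),K_2(m)$ are invoked only on the event where $\lev X_{t_{k-1}}\rev\le m$.
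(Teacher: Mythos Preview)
Your argument is correct and shares the paper's three key ingredients: the $\F_{t_{k-1}}$-measurability of $\{k\le\lambda_m\}$ together with $\lev X_{t_{k-1}}\rev\le m$ on that event, a one-step bound on $\lev F(X_{t_k})\rev$ in terms of $X_{t_{k-1}}$ and a single Brownian increment, and Lemma~\ref{lem:F} to recover $\lev X_{t_k}\rev$ from $\lev F(X_{t_k})\rev$. The execution of the middle step differs slightly. The paper starts from inequality~\eqref{eq:ineq} (shifted by one index), raises it to the power $p/2$, and then bounds the martingale-difference term $\E\bigl[\lev\D M_{t_k}\rev^{p/2}\1_{[0,\lambda_m]}(k)\bigr]$ via H\"older's inequality and the moments of $\D w_{t_{k-1}}$. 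You instead use the explicit identity $F(X_{t_k})=X_{t_{k-1}}+(1-\o)f(X_{t_{k-1}})\D t+g(X_{t_{k-1}})\D w_{t_{k-1}}$ (which is the paper's recursion $F(X_{t_k})=F(X_{t_{k-1}})+f(X_{t_{k-1}})\D t+g(X_{t_{k-1}})\D w_{t_{k-1}}$ with the first two terms collapsed) and bound its norm pointwise on the event before taking moments. Your route is a little more economical: since on the stopped event every $X_{t_{k-1}}$-dependent quantity is already controlled by a constant depending on $m$ via the local Lipschitz condition, the passage through the monotone condition embedded in~\eqref{eq:ineq} is unnecessary for this particular lemma, and the H\"older step is avoided. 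Both arguments yield the same conclusion with constants of the same nature.
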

\begin{proof}
The proof is given in the Appendix.
\end{proof}
For completeness of the exposition we recall the discrete Gronwall inequality, that
 we will use in the proof of Theorem \ref{TL2}.
\begin{lemma}[The Discrete Gronwall Inequality]
Let M be a positive integer. Let $u_{k}$ and $v_{k}$ be non-negative numbers for k=0,1,...,M. If
\[
u_{k}\le u_{0} + \sum_{j=0}^{k-1}v_{j}u_{j}, \qquad \forall k=1,2,...,M,
\]
then
\[
u_{k}\le u_{0} \exp\left( \sum_{j=0}^{k-1} v_{j}\right), \qquad \forall k=1,2,...,M.
\]
\end{lemma}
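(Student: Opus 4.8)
The plan is to reduce the problem to a clean one-step recursion by passing to the partial-sum sequence that majorizes $u_k$. Set $S_0 := u_0$ and $S_k := u_0 + \sum_{j=0}^{k-1} v_j u_j$ for $1 \le k \le M$, so that the hypothesis states exactly that $u_k \le S_k$ for every $k$. Since the $u_j$ and $v_j$ are non-negative, each $S_k$ is non-negative as well; this positivity is what makes the substitution $u_k \le S_k$ inside the sum harmless.

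First I would establish the recursion $S_{k+1} \le e^{v_k} S_k$. For any $k \ge 0$ we have $S_{k+1} = S_k + v_k u_k$; using $u_k \le S_k$ together with $v_k \ge 0$ gives $S_{k+1} \le (1 + v_k) S_k$, and the elementary scalar bound $1 + x \le e^x$ (with $x = v_k \ge 0$) upgrades this to $S_{k+1} \le e^{v_k} S_k$. This step is valid even at $k = 0$, where $u_0 = S_0$ turns the inequality $u_0 \le S_0$ into an equality. Iterating (telescoping) the recursion from $0$ up to $k-1$ then yields
\[
S_k \le S_0 \prod_{j=0}^{k-1} e^{v_j} = u_0 \exp\left( \sum_{j=0}^{k-1} v_j \right),
\]
and the assertion follows immediately from $u_k \le S_k$.

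There is no genuine obstacle here, as the argument is elementary; the only subtlety worth flagging is that one should not attempt a direct induction on the asserted bound for $u_k$, because the hypothesis couples $u_k$ to all of its predecessors simultaneously rather than just to $u_{k-1}$. Introducing the majorant $S_k$ is precisely what linearizes this coupling into a solvable one-step recursion, and the inequality $1 + x \le e^x$ is what converts the resulting product $\prod_{j=0}^{k-1}(1 + v_j)$ into the exponential of the sum that appears in the conclusion.
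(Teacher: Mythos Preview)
Your argument is correct. The paper does not supply its own proof of this lemma; it simply cites Mao et al.\ \cite{mao2006stochastic}. Your self-contained treatment --- introducing the majorant $S_k = u_0 + \sum_{j=0}^{k-1} v_j u_j$, deriving the one-step recursion $S_{k+1} \le (1+v_k) S_k \le e^{v_k} S_k$, and telescoping --- is the standard elementary proof and is exactly what one would expect to find behind that citation.
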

The proof can be found in Mao et al. \cite{mao2006stochastic}.
To prove the boundedness of the second moment for the $\o$-EM \eqref{oEM}, we need an additional but mild
assumption on the coefficients $f$ and $g$.
\begin{assp}\label{as:polynomial}
The coefficients of equation (\ref{eq:SDE})
satisfy the polynomial growth condition. That is,  there exists a pair of constants  $h\ge 1$ and $C(h)>0$  such that
\begin{equation} \label{ass:P}
\lev f(x) \rev \vee \lev g(x)\rev \leq C(h) ( 1 + \lev x \rev ^{h} ), \quad \forall x\in\RR^{n}.
\end{equation}
\end{assp}
Let us begin to establish the fundamental result of this paper that reveals
the boundedness of the second moments for SDEs \eqref{eq:SDE} under Assumptions
\ref{a0} and \ref{as:polynomial}.

\begin{theorem} \label{TL2}
Let Assumptions \ref{a0}, \ref{os_lip}, \ref{as:polynomial} hold, and $\o\ge 0.5$.
Then, for any $T>0$, there exists a constant $C(T)>0$, such that
the $\o$-EM scheme \eqref{oEM} has the following property
\begin{equation*}
\sup_{\D t\le \D t^*} \sup_{0\le t_{k}\le T} \E\lev X_{t_{k}}\rev ^{2}< C(T).
\end{equation*}
\end{theorem}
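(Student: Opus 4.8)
The plan is to control not $X_{t_k}$ directly but the auxiliary quantity $F(X_{t_k}) = X_{t_k}-\o f(X_{t_k})\D t$, recovering the bound on $\E\lev X_{t_k}\rev^2$ only at the very end via Lemma \ref{lem:F}. The first step is to rewrite the scheme \eqref{oEM} in terms of $F$: moving the implicit term to the left and substituting $X_{t_k}=F(X_{t_k})+\o f(X_{t_k})\D t$ collapses \eqref{oEM} into the clean recursion
\begin{equation*}
F(X_{t_{k+1}}) = F(X_{t_k}) + f(X_{t_k})\D t + g(X_{t_k})\D w_{t_k},
\end{equation*}
in which the implicitness has been absorbed into $F$ and, apart from the Gaussian increment $\D w_{t_k}$, the right-hand side is $\F_{t_k}$-measurable.

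Next I would square this and take the conditional expectation given $\F_{t_k}$. Since $\D w_{t_k}$ is independent of $\F_{t_k}$ with mean zero and $\E[\lev g(X_{t_k})\D w_{t_k}\rev^2\mid\F_{t_k}]=\lev g(X_{t_k})\rev^2\D t$, the cross terms drop and, after replacing $F(X_{t_k})$ by $X_{t_k}-\o f(X_{t_k})\D t$ in the remaining inner product, I obtain
\begin{equation*}
\E[\lev F(X_{t_{k+1}})\rev^2\mid\F_{t_k}] = \lev F(X_{t_k})\rev^2 + 2\D t\Big(\<X_{t_k},f(X_{t_k})\>+\frac{1}{2}\lev g(X_{t_k})\rev^2\Big) + (1-2\o)\lev f(X_{t_k})\rev^2\D t^2.
\end{equation*}
This is where the hypotheses do their work: the monotone condition \eqref{as1} bounds the middle bracket by $\a+\be\lev X_{t_k}\rev^2$, while the restriction $\o\ge 0.5$ makes the factor $(1-2\o)$ non-positive, so the $\lev f\rev^2$ term --- which cannot be controlled under super-linear growth --- may simply be discarded. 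This sign argument, together with the $F$-transformation, is the conceptual heart of the proof and the reason implicitness is indispensable.

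The remaining task is to promote this one-step estimate to a uniform bound, and here care is needed because, with $f,g$ growing super-linearly, neither $\E\lev X_{t_k}\rev^2$ nor even the integrability of the terms above is known a priori. I would therefore insert the stopping time $\l_m$ of \eqref{sd} and work with $a_k:=\E[\lev F(X_{t_k})\rev^2\1_{[0,\l_m]}(k)]$, whose finiteness is supplied by Lemma \ref{stopping} (applied with $p=2h$) together with the polynomial growth Assumption \ref{as:polynomial}, since $\lev F(x)\rev^2\le C(1+\lev x\rev^{2h})$. Note that $\1_{[0,\l_m]}(k+1)$ is $\F_{t_k}$-measurable and dominated by $\1_{[0,\l_m]}(k)$, so the indicators pass cleanly through the conditional expectation. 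Combining the discarded one-step estimate with Lemma \ref{lem:F} to re-express $\lev X_{t_k}\rev^2$ through $\lev F(X_{t_k})\rev^2$, and using $\D t\le\D t^*$ to keep the factor $(1-2\be\o\D t)^{-1}$ bounded by a constant, yields a linear recursion $a_{k+1}\le a_k(1+C\D t)+C\D t$ with $C$ independent of both $\D t$ and $m$.

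Finally, the discrete Gronwall inequality applied to this recursion gives $a_k\le C(T)$ for all $t_k\le T$, uniformly in $\D t\le\D t^*$ and in $m$. Letting $m\to\infty$ and invoking Fatou's lemma (since $\1_{[0,\l_m]}(k)\to 1$ almost surely) removes the stopping time and bounds $\E\lev F(X_{t_k})\rev^2$; one last application of Lemma \ref{lem:F} converts this into the claimed bound on $\E\lev X_{t_k}\rev^2$. I expect the main obstacle to be the rigorous bookkeeping around the stopping time: because of overshooting one only controls $\lev X_{t_{k-1}}\rev$, not $\lev X_{t_k}\rev$, on $\{k\le\l_m\}$, so securing the a priori finiteness that launches the Gronwall argument --- precisely the role of Lemma \ref{stopping} --- and verifying that the final constant is genuinely independent of $m$ are the delicate points.
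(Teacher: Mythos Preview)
Your proposal is correct and follows essentially the same route as the paper: the $F$-transformation yielding the clean recursion, the use of the monotone condition together with $\o\ge 0.5$ to discard the $\lev f\rev^2$ term, localization via $\l_m$ with Lemma~\ref{stopping} supplying a priori integrability, Lemma~\ref{lem:F} to pass between $\lev X\rev^2$ and $\lev F(X)\rev^2$, discrete Gronwall, and Fatou to remove the stopping time. The only cosmetic difference is that the paper writes out the martingale increment $\Delta M_{t_{k+1}}$ explicitly and sums the one-step inequality up to $N\wedge\l_m$ before taking expectations, whereas you take conditional expectations first and work with the recursion for $a_k=\E[\lev F(X_{t_k})\rev^2\1_{[0,\l_m]}(k)]$; these are equivalent bookkeeping choices.
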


\begin{proof}
By  definition \eqref{eq:F} of function $F$, we can represent the $\o$-EM scheme (\ref{oEM}) as
\begin{equation*}
F(X_{t_{k+1}})=F(X_{t_{k}})+f(X_{t_{k}})\D t+g(X_{t_{k}})\D w_{t_{k}}.
\end{equation*}
Consequently writing $\<F(X_{t_{k+1}}),F(X_{t_{k+1}})\> = \lev F(X_{t_{k+1}}) \rev^{2}$
and utilizing Assumption \ref{a0} we obtain
\begin{align} \label{eq:Fexp}
\lev F(X_{t_{k+1}})\rev^2 = & \lev F(X_{t_{k}})\rev^2+\lev f(X_{t_{k}})\rev^2\Delta t^2+\lev g(X_{t_{k}})\rev^2\D t\\
& +  2\<F(X_{t_{k}}),f(X_{t_{k}})\>\D t+\D M_{t_{k+1}} \nonumber \\
 = & \lev F(X_{t_{k}})\rev^2 \nonumber\\
& + \left(2\<X_{t_{k}},f(X_{t_{k}})\> +\lev g(X_{t_{k}})\rev^2 \right)\Delta t \nonumber \\
& +  (1-2\o)\lev f(X_{t_{k}})\rev^2\D t^2+\D M_{t_{k+1}}, \nonumber
\end{align}
where
\begin{align*}
\D M_{t_{k+1}} & =  \lev g(X_{t_{k}}) \D w_{t_{k+1}} \rev^{2} - \lev g(X_{t_{k}})\rev^2 \D t
               +2 \< F(X_{t_{k}}) , g(X_{t_{k}}) \Delta w_{t_{k+1}} \> \\
           & +  2\<f(X_{t_{k}})\Delta t,g(X_{t_{k}})\D w_{t_{k+1}}\>,
\end{align*}
is a local martingale. By Assumption \ref{a0} and the fact that $\o\ge0.5$,
\begin{align} \label{eq:ineq}
\lev F(X_{t_{k+1}})\rev^2
 \le & \, \lev F(X_{t_{k}})\rev^2 +  2\a \D t + 2\be \lev X_{t_{k}} \rev^{2}  \D t +\D M_{t_{k+1}}.
\end{align}
Let $N$ be any nonnegative integer such that $N \D t \le T$.  Summing up both sides of
inequality \eqref{eq:ineq} from $k=0$ to $N\we\l_m$, we get
\begin{align} \label{f1}
\lev F( X_{t_{N\we\l_m+1}} ) \rev ^{2}
&\leq
\lev F( X_{t_{0}} )\rev ^{2} +2 \a T
  + 2\be \sum_{k=0}^{N\we\l_m} \lev X_{t_{k}} \rev^{2} \D t
       +\sum_{k=0}^{N\we\l_m} \D M_{t_{k+1}} \notag \\
     & \le
    \lev F( X_{t_{0}} )\rev ^{2} + 2\a T
  + 2\be \sum_{k=0}^{N} \lev X_{t_{k\we \l_{m}}} \rev^{2} \D t
       +\sum_{k=0}^{N} \D M_{t_{k+1}} \1_{[0,\lambda _{m}]}(k).
\end{align}
Applying Lemma \ref{stopping},  Assumption \ref{as:polynomial} and noting that $X_{t_{k}}$ and
$\1_{[0,\l_{m}]}(k)$ are $\F_{t_k}$-measurable while
$\D w_{t_{k}}$ is independent of $\F_{t_k}$, we can take 
expectations on both sides of (\ref{f1}) to get
\begin{align*}
  \E
 \lev F( X_{t_{N\we\l_m+1}} ) \rev^{2} \notag
   &\le
 \lev F( X_{t_{0}} ) \rev^2 + 2\a T + 2\be \, \E
   \left[
   \sum_{k=0}^{N} \lev X_{t_{_{k\we \l_{m}}}} \rev^{2} \D t
   \right].
\end{align*}
By Lemma \ref{lem:F}
\begin{align*}
  \E
 \lev F( X_{t_{N\we\l_m+1}} ) \rev^{2} \notag
   \le  &
  \lev F( X_{t_{0}} ) \rev^2 + (2\a + 2\be( 1- 2 \be \o \D t)^{-1}\,2\o\a\D t ) (T+\D t) \\
& + 2 \be \, ( 1- 2 \be \o \D t)^{-1} \, \E
   \left[
   \sum_{k=0}^{N} \lev F(X(t_{_{k\we \l_{m}}})) \rev^{2} \D t
   \right].
\end{align*}
By the discrete Gronwall inequality
\begin{equation} \label{f1b}
 \E
    \lev F( X_{t_{N\we\l_m+1}} ) \rev^{2}
 \le
  \left[ \lev F( X_{t_{0}} ) \rev^2 +(2\a + 2\be ( 1- 2 \be \o \D t)^{-1}\,2\o\a\D t ) 
(T+\D t) \right] \exp\left(2\be \, ( 1- 2 \be \o \D t)^{-1}(T+\D t)\right),
  \end{equation}
where we use the fact that $N\D t\le T$. Thus, letting $m\rightarrow \infty $ in (\ref{f1b}) and applying
Fatou's lemma, we get
\[
\ \E
    \lev F (X_{t_{N+1}} )\rev^{2}
 \le
   \left[ \lev F( X_{t_{0}} ) \rev^2 +(2\a + 2\be ( 1- 2 \be \o \D t)^{-1}\,2\o\a\D t ) 
(T+\D t) \right] \exp\left(2\be \, ( 1- 2 \be \o \D t)^{-1}(T+\D t)\right).
  \]
By Lemma \ref{lem:F}, the proof is complete.
\end{proof}
\section{Forward-Backward Euler-Maruyama Scheme}
We find in our analysis that it is convenient to work with a continuous extension
of a numerical method. This continuous extension enables us to use the powerful
continuous-time stochastic analysis in order to formulate theorems on numerical
approximations. We find it particularly useful in the proof of forthcoming Theorem
\ref{ProbE}. Let us define
\begin{equation*}
\eta (t):=t_{k},\quad \mathrm{~for~} \quad t\in \lbrack t_{k},t_{k+1}), \ k\ge
0,
\end{equation*}
\begin{equation*}
\eta_{+} (t):=t_{k+1},\quad \mathrm{~for~} \quad t\in \lbrack t_{k},t_{k+1}), \ k\ge
0.
\end{equation*}
One possible  continuous version of the $\o$-EM is given by
\begin{equation} \label{CEMdef}
X(t)=X_{t_0} + \o \, \int_{0}^{t}f(X_{\eta_{+}(s)})ds
   + (1-\o) \, \int_{0}^{t}f(X_{\eta(s)})ds + \int_{0}^{t}g(X_{\eta(s)})dw(s), \quad t\ge 0.
\end{equation}
Unfortunately, this $X(t)$ is not $\mathcal{F}_t$-adapted whence it does not meet
the fundamental requirement in the classical stochastic analysis. 
To avoid using Malliavin calculus, we introduce a new numerical scheme, which
we call the \emph{Forward-Backward Euler-Maruyama (FBEM) scheme}: 
 Once we compute the discrete values $X_{t_{k}}$ from the $\o$-EM,
that is
\begin{equation*}
X_{t_{k}}=X_{t_{k-1}} + \o f(X_{t_{k}})\D t + (1-\o) f(X_{t_{k-1}})\D t
+ g(X_{t_{k-1}})\D w_{t_{k-1}},
\end{equation*}
we define the discrete FBEM by 
\begin{equation} \label{eq:FBEM}
\hat{X}_{t_{k+1}}=\hat{X}_{t_{k}}+f(X_{t_{k}})\D t
+g(X_{t_{k}})\Delta w_{t_{k}},
\end{equation}
where $\hat{X}_{t_{0}}=X_{t_{0}}=x_{0}$, and 
the continuous FBEM   by
\begin{equation} \label{CBFEMdef}
\hat{X}(t)=\hat{X}_{t_0} +\int_{0}^{t}
f(X_{\eta(s)})ds+\int_{0}^{t}g(X_{\eta (s)})dw(s), \quad t\ge 0.
\end{equation}
Note that
the continuous and
discrete BFEM schemes coincide at the gridpoints; that is,
$\hat{X}(t_k) = \hat{X}_{t_k}$.
%
\subsection{Strong Convergence on the Compact Domain}
It this section we prove the strong convergence theorem. We begin by showing that
both schemes of the FBEM \eqref{eq:FBEM} and the $\o$-EM \eqref{oEM} stay
close to each other on a compact domain.
Then we estimate the probability that both
continuous FBEM \eqref{CBFEMdef} and $\o$-EM (\ref{oEM}) will not explode on a finite time interval.
\begin{lemma} \label{BFEMp}
Let Assumptions \ref{a0}, \ref{os_lip}, \ref{as:polynomial} hold, and $\o\ge 0.5$.
Then for any integer $p\ge 2$ and $m\ge \lev x_{0} \rev$, there exists
a constant $C(m,p)$ such that
\begin{equation*}
\E \left[ \lev \hat{X}_{t_{k}}-X_{t_{k}}\rev ^{p}\1_{[0,\l_{m}]}(k)\right] \le C(m,p) \D t^{p}, \qquad \forall k \in \NN,
\end{equation*}
and for $F(x) = x - \o f(x) \D t$ we have
\begin{equation*} 
\lev \hat{X}_{t_{k}} \rev^{2} \ge \frac{1}{2} \lev F(X_{t_{k}})\rev^{2} - \lev \o f(x_{0}) \D t \rev^{2}  \qquad \forall k \in \NN.
\end{equation*}
\end{lemma}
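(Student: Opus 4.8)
The plan is to reduce both assertions to a single algebraic identity linking $\hat{X}_{t_k}$ and $F(X_{t_k})$. First I would write the FBEM \eqref{eq:FBEM} and the $\o$-EM \eqref{oEM} in their summed forms starting from the common initial value $x_0$. Telescoping the FBEM gives $\hat{X}_{t_k} = x_0 + \sum_{j=0}^{k-1}\big[f(X_{t_j})\D t + g(X_{t_j})\D w_{t_j}\big]$, and telescoping the $\o$-EM, after reindexing the implicit drift term via $\sum_{j=0}^{k-1}f(X_{t_{j+1}}) = \sum_{j=1}^{k}f(X_{t_j})$, gives $X_{t_k} = x_0 + \o\D t\sum_{j=1}^{k}f(X_{t_j}) + (1-\o)\D t\sum_{j=0}^{k-1}f(X_{t_j}) + \sum_{j=0}^{k-1}g(X_{t_j})\D w_{t_j}$. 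The key structural observation is that both schemes use exactly the same stochastic increments $g(X_{t_j})\D w_{t_j}$.

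Subtracting, the diffusion sums cancel identically and the drift sums collapse to
\[
\hat{X}_{t_k} - X_{t_k} = \o\D t\Big[\sum_{j=0}^{k-1}f(X_{t_j}) - \sum_{j=1}^{k}f(X_{t_j})\Big] = \o\D t\big[f(x_0) - f(X_{t_k})\big].
\]
Since $F(x) = x - \o f(x)\D t$, this is equivalent to $\hat{X}_{t_k} = F(X_{t_k}) + \o f(x_0)\D t$, which I regard as the heart of the lemma; both stated estimates follow immediately.

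For the first estimate I would raise $\lev \hat{X}_{t_k} - X_{t_k}\rev = \o\D t\,\lev f(x_0) - f(X_{t_k})\rev$ to the $p$-th power, multiply by $\1_{[0,\l_m]}(k)$, and take expectations. Bounding $\lev f(x_0) - f(X_{t_k})\rev^p \le 2^{p-1}\big(\lev f(x_0)\rev^p + \lev f(X_{t_k})\rev^p\big)$ and invoking the polynomial growth of Assumption \ref{as:polynomial}, the problem reduces to controlling $\E\big[\lev X_{t_k}\rev^{hp}\1_{[0,\l_m]}(k)\big]$. Because $hp \ge 2$, Lemma \ref{stopping} furnishes a finite bound $C(hp,m)$; the factor $\D t^p$ is untouched by the expectation, which yields the claimed $C(m,p)\D t^p$.

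For the second estimate I would apply the elementary bound $\lev a + b\rev^2 \ge \hf\lev a\rev^2 - \lev b\rev^2$ (a consequence of $\lev a\rev^2 \le 2\lev a+b\rev^2 + 2\lev b\rev^2$) to the identity $\hat{X}_{t_k} = F(X_{t_k}) + \o f(x_0)\D t$ with $a = F(X_{t_k})$ and $b = \o f(x_0)\D t$, obtaining $\lev \hat{X}_{t_k}\rev^2 \ge \hf\lev F(X_{t_k})\rev^2 - \lev \o f(x_0)\D t\rev^2$ at once. The only genuine work is deriving the telescoping identity; after that both claims are routine. The one point to watch is that Lemma \ref{stopping} must be applied at exponent $hp$, not $p$, so that the polynomial growth exponent $h$ can be absorbed.
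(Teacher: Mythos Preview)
Your argument is correct and follows essentially the same route as the paper: both derive the telescoping identity $\hat{X}_{t_k}-X_{t_k}=\o\D t\,[f(x_0)-f(X_{t_k})]$, then use Assumption~\ref{as:polynomial} together with Lemma~\ref{stopping} for the first bound and an elementary quadratic inequality for the second. Your remark that Lemma~\ref{stopping} must be invoked at exponent $hp$ rather than $p$ is a useful clarification that the paper leaves implicit.
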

\begin{proof}
Summing up both schemes of the FBEM \eqref{eq:FBEM} and the $\o$-EM \eqref{oEM}, respectively, we obtain
\begin{equation*}
 \hat{X}_{t_{N}} - X_{t_{N}} = \o(   f(X_{t_{0}}) -f(X_{t_{N}})     ) \D t.
\end{equation*}
By H\"{o}lder's inequality, Lemma \ref{stopping} and Assumption \ref{as:polynomial}, we then
see easily that there exists a constant $C(m,p)>0$, such that
\begin{equation} \label{foregrid}
\E
    \left[
         \lev \hat{X}_{t_{N}}-X_{t_{N}}\rev ^{p} \1_{[0,\l_{m}]}(N) \right]
            = \o \,\E
                \left[ \lev   f(X_{t_{0}})\Delta t -f(X_{t_{N}})\D t\rev ^{p} \1_{[0,\l_{m}]}(N) \right]
                \leq C(m,p) \D t ^{p},
\end{equation}
as required. Next, using inequality $2|a||b|\le \e|a|^2 + \e^{-1}|b|^2$ with $\e=0.5$ we arrive at
\begin{align*}
 \lev \hat{X}_{t_{N}} \rev^{2} = &\lev X_{t_{N}} -  \o f(X_{t_{N}}) \D t  + \o   f(X_{t_{0}}) \D t\rev^{2}
\ge (\lev F(X_{t_{N}}) \rev - \lev \o   f(X_{t_{0}}) \D t\rev )^{2} \\
\ge & \frac{1}{2} \lev F(X_{t_{N}}) \rev^{2} - \lev \o   f(X_{t_{0}}) \D t\rev^{2}.
\end{align*}

\end{proof}
The following Theorem provides us with a similar estimate for the distribution of the first passage time
for the continuous FBEM \eqref{CBFEMdef} and $\o$-EM \eqref{oEM} as we have obtained 
for the SDEs \eqref{eq:SDE}
in Theorem \ref{Existence}.
We will use this estimate in the proof of forthcoming Theorem \ref{1dconv}.
\begin{theorem} \label{ProbE}
Let Assumptions \ref{a0}, \ref{os_lip}, \ref{as:polynomial} hold, and $\o \ge 0.5$. Then,  for any
given $\epsilon>0$, there exists a positive integer $N_{0}$  such that for every $m
\geq N_{0}$, we can find a positive number $\D t_0=\D t_0(m)$ so that whenever $\D t
\le \D t_0$,
\begin{equation*}
\PP(\vartheta _{m}<T)\leq \epsilon, \quad \mathrm{for} \,\,\, T>0,
\end{equation*}
where $\vartheta _{m}=\inf \{t>0: \lev \hat{X}(t) \rev \ge m
\  \mathrm{~or~} \ \lev X_{\eta(t)}\rev >m \}$.
\end{theorem}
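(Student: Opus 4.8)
The plan is to follow the template of Theorem \ref{Existence}: produce a moment bound on the stopped continuous FBEM and convert it into a probability bound by Chebyshev's inequality. The two new features to cope with are that $\vartheta_m$ may be triggered \emph{either} by $\hat X$ reaching level $m$ \emph{or} by the $\o$-EM skeleton $X_{\eta}$ overshooting $m$, and that the coefficients driving $\hat X(t)$ are frozen at $X_{\eta(t)}$ rather than at $\hat X(t)$.

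First I would establish a lower bound at the stopping time: on $\{\vartheta_m<T\}$ one always has $\lev\hat X(\vartheta_m)\rev^{2}\ge c\,m^{2}$ for some $c\in(0,1)$ and all small $\D t$. Since $\hat X$ is continuous, if $\vartheta_m$ is caused by $\lev\hat X\rev$ reaching $m$ then $\lev\hat X(\vartheta_m)\rev=m$. Otherwise $\vartheta_m=t_{\l_m}$ is a grid point with $\lev X_{t_{\l_m}}\rev>m$, and because $\hat X(t_{\l_m})=\hat X_{t_{\l_m}}$, the second assertion of Lemma \ref{BFEMp} together with Lemma \ref{lem:F} gives $\lev\hat X_{t_{\l_m}}\rev^{2}\ge\tfrac12(1-2\be\o\D t)m^{2}-\o\a\D t-\lev\o f(x_{0})\D t\rev^{2}\ge c\,m^{2}$ for $\D t$ small (one may take $c=\tfrac14$). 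Chebyshev's inequality then yields
\[
\PP(\vartheta_m<T)\le\frac{1}{c\,m^{2}}\,\E\lev\hat X(\vartheta_m\we T)\rev^{2}.
\]
It therefore suffices to bound $u(t):=\E\lev\hat X(\vartheta_m\we t)\rev^{2}$ uniformly in $m$ and in $\D t\le\D t^{*}$. Applying the It\^o formula to $\lev\hat X(t)\rev^{2}$ and stopping at $\vartheta_m\we t$ removes the stochastic integral in expectation, leaving
\[
u(t)=\lev x_{0}\rev^{2}+\E\int_{0}^{\vartheta_m\we t}\Big(2\<\hat X(s),f(X_{\eta(s)})\>+\lev g(X_{\eta(s)})\rev^{2}\Big)\,ds.
\]
Splitting $\hat X(s)=X_{\eta(s)}+\D(s)$ with $\D(s)=\hat X(s)-X_{\eta(s)}$, the leading part $2\<X_{\eta(s)},f(X_{\eta(s)})\>+\lev g(X_{\eta(s)})\rev^{2}$ is bounded by the monotone condition \eqref{as1} by $2\a+2\be\lev X_{\eta(s)}\rev^{2}$, and $\lev X_{\eta(s)}\rev^{2}$ is in turn controlled by $\lev\hat X_{\eta(s)}\rev^{2}$ through Lemmas \ref{BFEMp} and \ref{lem:F}. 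Since $s<\vartheta_m$ forces $\eta(s)<\vartheta_m$, this feeds back $\int_{0}^{t}u(\eta(s))\,ds$, and a Gronwall argument in $t$ (with constant depending only on $\be,\o,\D t^{*}$) yields $u(T)\le C(T)$ independent of $m$, once the correction terms below are controlled.

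The genuine obstacle is the remainder $2\<\D(s),f(X_{\eta(s)})\>$. For $s\in[t_{k},t_{k+1})$ one has $\D(s)=(\hat X_{t_{k}}-X_{t_{k}})+f(X_{t_{k}})(s-t_{k})+g(X_{t_{k}})(w(s)-w(t_{k}))$. The first two pieces are $O(\D t)$ and, via Lemma \ref{BFEMp}, Assumption \ref{as:polynomial} and Lemma \ref{stopping} (applied through Cauchy–Schwarz against $\1_{[0,\l_m]}$), contribute only $C(m)\D t$. The dangerous piece is $2\<g(X_{t_{k}})(w(s)-w(t_{k})),f(X_{t_{k}})\>$: it sits in the $ds$-drift, is only $O(\sqrt{\D t})$ pointwise, and cannot be absorbed by Young's inequality without re-creating a full-strength $\lev g\rev^{2}$ or $\lev f\rev^{2}$ term that would wreck the monotone estimate. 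This is exactly where the FBEM construction pays off. The skeleton overshoot time $\sigma_m:=\inf\{t:\lev X_{\eta(t)}\rev>m\}=t_{\l_m}$ satisfies $\{s<\sigma_m\}=\{\l_m>k\}\in\F_{\eta(s)}$, whereas $\{s<\vartheta_m\}$ is only $\F_{s}$-measurable. Bounding $\1_{\{s<\vartheta_m\}}\le\1_{\{s<\sigma_m\}}$ and passing to absolute values, I would condition on $\F_{\eta(s)}$: the increment $w(s)-w(t_{k})$ is then independent of the $\F_{\eta(s)}$-measurable factors $g(X_{t_{k}}),f(X_{t_{k}}),\1_{\{s<\sigma_m\}}$, and $\E\lev w(s)-w(t_{k})\rev\le\sqrt{d\,\D t}$, so Assumption \ref{as:polynomial} and Lemma \ref{stopping} bound this term by $C(m)\sqrt{\D t}\,T$. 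This is precisely the measurability difficulty the FBEM is designed to bypass without Malliavin calculus.

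Collecting the estimates gives $u(T)\le C(T)+C(m)\big(\D t+\sqrt{\D t}\big)$ with $C(T)$ independent of $m$. Given $\epsilon>0$, I would first choose $N_{0}$ so large that $(C(T)+1)/(c\,m^{2})\le\epsilon$ for all $m\ge N_{0}$, and then, for each such $m$, pick $\D t_{0}(m)$ small enough that $c\ge\tfrac14$ holds in the lower bound and $C(m)(\D t+\sqrt{\D t})\le1$. For $\D t\le\D t_{0}(m)$ the Chebyshev bound of the second paragraph then gives $\PP(\vartheta_m<T)\le(C(T)+1)/(c\,m^{2})\le\epsilon$, which is the desired conclusion. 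The main difficulty throughout is the single drift term containing the Brownian increment; everything else is a stopping-time bookkeeping exercise combined with the already-established Lemmas \ref{lem:F}, \ref{stopping} and \ref{BFEMp}.
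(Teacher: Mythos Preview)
Your proof is correct and follows the paper's strategy: It\^o's formula on $\lev\hat X\rev^{2}$, the monotone condition on the $X_{\eta(s)}$-part, a cross-term estimate, Gronwall, and then Chebyshev via the lower bound from Lemmas~\ref{lem:F} and~\ref{BFEMp}. You overcomplicate the cross term, though: by the very definition of $\vartheta_m$, on $\{s<\vartheta_m\}$ one already has $\lev X_{\eta(s)}\rev\le m$, so $\lev f(X_{\eta(s)})\rev,\lev g(X_{\eta(s)})\rev\le C(m)$ deterministically, and the paper simply bounds $2\langle\hat X(s)-X_{\eta(s)},f(X_{\eta(s)})\rangle\le C(m)\lev\hat X(s)-X_{\eta(s)}\rev$ and estimates $\E\int_{0}^{T\wedge\vartheta_m}\lev\hat X(s)-X_{\eta(s)}\rev\,ds\le C(m,T)\sqrt{\D t}$ directly---no conditioning, no appeal to Lemma~\ref{stopping}, and no ``dangerous piece''.
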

\begin{proof}
The proof is given in the Appendix.
\end{proof}
\subsection{Strong Convergence on the Whole Domain}
In this section we present the main theorem of this paper, the strong
convergence of the $\o$-EM (\ref{oEM}) to the solution of (\ref{eq:SDE}). First, we will
show that the continuous FBEM (\ref{CBFEMdef}) converges to the true solution on
the compact domain.
This, together with Theorem \ref{ProbE}, will enable us to extend convergence to the whole domain.
Let us define the stopping time
$$\theta _{m}=\tau_{m}\wedge \vartheta_{m},$$
where $\tau_{m}$ and $\vartheta_{m}$ are defined in Theorems \ref{Existence} and \ref{ProbE}, respectively.
\begin{lemma} \label{C1}
Let Assumptions \ref{a0}, \ref{os_lip}, \ref{as:polynomial} hold, and $\o \ge 0.5$. For 
sufficiently large $m$, there exists a positive
constant $C(T,m)$, such that
\begin{equation} \label{eq:compactcon}
\E\left[
\sup_{0\leq t\leq T}\lev \hat{X}(t\wedge\theta _{m})-x(t\wedge \theta _{m})\rev^{2}
\right]
\leq
C(T,m) \D t.
\end{equation}
\end{lemma}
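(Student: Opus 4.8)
The plan is to run the classical It\^o-based strong convergence argument on the stochastic interval $[0,\theta_m]$, exploiting that the three processes $x(s)$, $\hat{X}(s)$ and $X_{\eta(s)}$ all remain inside the ball of radius $m$ there. Indeed, for $s<\theta_m=\tau_m\we\vartheta_m$ the definition of $\tau_m$ gives $\lev x(s)\rev\le m$, while the definition of $\vartheta_m$ gives both $\lev\hat{X}(s)\rev<m$ and $\lev X_{\eta(s)}\rev\le m$; this is precisely why $\vartheta_m$ in Theorem \ref{ProbE} is built from both $\hat{X}(t)$ and $X_{\eta(t)}$. Consequently the local Lipschitz condition of Assumption \ref{a0} supplies a single constant $C(m)$ controlling $\lev f(u)-f(v)\rev$ and $\lev g(u)-g(v)\rev$ for all arguments arising on $[0,\theta_m]$.

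First I would subtract the integral representation \eqref{CBFEMdef} of the continuous FBEM from that of the true solution \eqref{eq:SDE} to obtain, for every $t$,
\begin{equation*}
\hat{X}(t\we\theta_m)-x(t\we\theta_m)=\int_0^{t\we\theta_m}\big(f(X_{\eta(s)})-f(x(s))\big)\,ds+\int_0^{t\we\theta_m}\big(g(X_{\eta(s)})-g(x(s))\big)\,dw(s).
\end{equation*}
Squaring, using $(a+b)^2\le 2a^2+2b^2$, and applying $\E\sup_{0\le t\le T}$, I would bound the drift term by the Cauchy--Schwarz inequality and the diffusion term by Doob's $L^2$ martingale inequality together with the It\^o isometry. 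After invoking the uniform Lipschitz bound $C(m)$ above, these estimates combine to give
\begin{equation*}
\E\Big[\sup_{0\le t\le T}\lev\hat{X}(t\we\theta_m)-x(t\we\theta_m)\rev^2\Big]\le C(m)\,\E\int_0^{T}\lev X_{\eta(s)}-x(s)\rev^2\,\1_{[0,\theta_m]}(s)\,ds.
\end{equation*}

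The heart of the estimate is then to split the integrand through the grid value and the continuous interpolation,
\begin{equation*}
\lev X_{\eta(s)}-x(s)\rev^2\le 3\lev X_{\eta(s)}-\hat{X}(\eta(s))\rev^2+3\lev\hat{X}(\eta(s))-\hat{X}(s)\rev^2+3\lev\hat{X}(s)-x(s)\rev^2.
\end{equation*}
On $\{s<\theta_m\}$ the discrete exit index $\l_m$ has not yet been reached, so Lemma \ref{BFEMp} (with $p=2$) controls the first term by an $O(\D t^2)$ contribution; the second term equals $f(X_{\eta(s)})(s-\eta(s))+g(X_{\eta(s)})(w(s)-w(\eta(s)))$, whose second moment is at most $C(m)\D t$ because Assumption \ref{as:polynomial} together with $\lev X_{\eta(s)}\rev\le m$ bounds $\lev f\rev$ and $\lev g\rev$ by a constant depending only on $m$; the third term is the error process itself. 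Substituting these back leaves
\begin{equation*}
\E\Big[\sup_{0\le t\le T}\lev\hat{X}(t\we\theta_m)-x(t\we\theta_m)\rev^2\Big]\le C(T,m)\,\D t+C(m)\int_0^T\E\Big[\sup_{0\le u\le s}\lev\hat{X}(u\we\theta_m)-x(u\we\theta_m)\rev^2\Big]\,ds,
\end{equation*}
and the continuous Gronwall inequality closes the argument.

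I expect the main obstacle to be the careful bookkeeping around the stopping time: ensuring via the indicator $\1_{[0,\theta_m]}$ that $\eta(s)$ as well as $s$ lies before the first exit, so that the single Lipschitz constant $C(m)$ and the polynomial bound genuinely apply, and reconciling the continuous exit time $\theta_m$ with the discrete indicator $\1_{[0,\l_m]}$ appearing in Lemma \ref{BFEMp}. The use of Doob's inequality (rather than a naive $L^2$ isometry) to control the supremum of the stochastic integral is the other point requiring care; the remaining steps are routine moment estimates.
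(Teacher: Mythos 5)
Your proposal is correct and follows essentially the same route as the paper's proof: bound the supremum via Doob/BDG and Cauchy--Schwarz/H\"older, invoke the local Lipschitz constant $C(m)$ valid on $[0,\theta_m]$, decompose $X_{\eta(s)}-x(s)$ through $\hat{X}_{\eta(s)}$ and $\hat{X}(s)$ so that Lemma \ref{BFEMp} and the one-step increment estimate yield the $O(\D t)$ forcing term, and close with Gronwall. The only cosmetic difference is that you perform the three-way splitting in a single step, whereas the paper first splits off $\hat{X}(s)$ and then reuses the reasoning behind \eqref{eq:numbound} for the remaining piece.
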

\begin{proof}
The proof is given in the Appendix.
\end{proof}
We are now ready to prove the strong convergence  of the $\o$-EM \eqref{oEM} to the true solution of \eqref{eq:SDE}.

\begin{theorem} \label{1dconv}
Let Assumptions \ref{a0}, \ref{os_lip}, \ref{as:polynomial} hold, and $\o \ge 0.5$. For any given $T=N\,\D t>0$
 and $s\in[1,2)$, $\o$-EM scheme \eqref{oEM} has the property
\begin{equation} \label{eq:error}
\lim_{\D t \rightarrow 0} \E \lev X_{T}-x(T)\rev^{s}=0.
\end{equation}
\end{theorem}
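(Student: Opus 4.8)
The plan is to follow the localization argument of Higham, Mao and Stuart, trading convergence on a compact set against the small probability of leaving it. Throughout I write $X_T = X_{t_N}$ and use that $\hat X(T) = \hat X_{t_N}$, since the continuous and discrete FBEM agree at grid points. With $\theta_m = \tau_m \we \vartheta_m$ the starting point is to split the error according to whether the path has exited the ball of radius $m$ before time $T$:
\[
\E \lev X_T - x(T)\rev^s = \E\!\left[\lev X_T - x(T)\rev^s \1_{\{\theta_m > T\}}\right] + \E\!\left[\lev X_T - x(T)\rev^s \1_{\{\theta_m \le T\}}\right].
\]
On the first (compact-domain) event everything is controlled by the earlier results; on the second (exit) event the error itself is uncontrollable, so a moment--probability trade-off via Young's inequality is needed.

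For the first term I would insert $\hat X(T)$ and use $|a+b|^s \le 2^{s-1}(|a|^s+|b|^s)$. The key observation making Lemma \ref{BFEMp} applicable is that on $\{\theta_m > T\}$ we have $\vartheta_m > T$, hence $\lev X_{t_k}\rev \le m$ for every $k \le N-1$, so $\l_m \ge N$ and $\1_{\{\theta_m>T\}} \le \1_{[0,\l_m]}(N)$ pointwise. Therefore $\E[\lev X_T - \hat X_T\rev^s \1_{\{\theta_m>T\}}] \le \E[\lev X_T - \hat X_T\rev^s \1_{[0,\l_m]}(N)]$, which Lemma \ref{BFEMp} with $p=2$ together with Jensen's inequality (concavity of $u\mapsto u^{s/2}$, since $s<2$) bounds by $(C(m,2)\D t^2)^{s/2}=C(m,s)\D t^s$. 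For the remaining piece, on $\{\theta_m > T\}$ the stopped processes coincide with the unstopped ones, so $\lev \hat X(T)-x(T)\rev \1_{\{\theta_m>T\}} \le \sup_{0\le t\le T}\lev \hat X(t\we\theta_m)-x(t\we\theta_m)\rev$, and Lemma \ref{C1} with Jensen again gives $\E[\lev \hat X(T)-x(T)\rev^s \1_{\{\theta_m>T\}}] \le (C(T,m)\D t)^{s/2}$. Hence, for each fixed $m$, the whole compact-domain term tends to $0$ as $\D t\to 0$.

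For the exit term I would apply Young's inequality in the form $x^s \1_A \le \tfrac{s\de}{2}x^2 + \tfrac{2-s}{2}\de^{-s/(2-s)}\1_A$ (conjugate exponents $2/s$ and $2/(2-s)$), with $x=\lev X_T - x(T)\rev$, $A=\{\theta_m\le T\}$ and $\de>0$ free, giving
\[
\E\!\left[\lev X_T - x(T)\rev^s \1_{\{\theta_m\le T\}}\right] \le \frac{s\de}{2}\,\E\lev X_T - x(T)\rev^2 + \frac{2-s}{2}\,\de^{-s/(2-s)}\,\PP(\theta_m \le T).
\]
Here $\E\lev X_T - x(T)\rev^2 \le 2\,\E\lev X_T\rev^2 + 2\,\E\lev x(T)\rev^2 \le C$ uniformly in $\D t$ by Theorem \ref{TL2} and Theorem \ref{Existence}, while $\PP(\theta_m\le T) \le \PP(\tau_m\le T) + \PP(\vartheta_m < T) \le C/m^2 + \PP(\vartheta_m < T)$, the first summand by \eqref{eq:lim} and the second made arbitrarily small by Theorem \ref{ProbE} once $m\ge N_0$ and $\D t \le \D t_0(m)$.

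The proof is completed by the ordered passage to the limit, which I expect to be the main obstacle, as the constants are intertwined. Given a target $\e'>0$, I would first fix $\de$ small enough that $\tfrac{s\de}{2}C < \e'/3$; then, with $\de$ frozen, fix $m\ge N_0$ large enough that the $C/m^2$ contribution and (via Theorem \ref{ProbE}) the $\PP(\vartheta_m<T)$ contribution to the Young term together fall below $\e'/3$; finally, with $m$ fixed, send $\D t\to 0$ (keeping $\D t\le \D t_0(m)$) so that the compact-domain term $2^{s-1}(C(m,s)\D t^s + (C(T,m)\D t)^{s/2})$ drops below $\e'/3$. The delicate point is that $C(T,m)$ from Lemma \ref{C1} may grow with $m$, so $m$ must be committed \emph{before} $\D t$ is sent to zero; this is precisely why the Young exponent is taken to be $2$ rather than $s$, so that the exit term requires only the uniform second moment and no $\D t$-dependent estimate. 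Since $\e'>0$ is arbitrary, $\limsup_{\D t\to 0}\E\lev X_T - x(T)\rev^s = 0$, which is \eqref{eq:error}.
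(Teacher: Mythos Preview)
Your proposal is correct and follows essentially the same localization argument as the paper: the same splitting on $\{\tau_m>T,\vartheta_m>T\}$ versus its complement, the same insertion of $\hat X(T)$ with Lemmas~\ref{BFEMp} and~\ref{C1} on the compact part, and the same Young-inequality trade-off using Theorems~\ref{Existence}, \ref{TL2}, \ref{ProbE} and \eqref{eq:lim} on the exit part. Your write-up is in fact more careful than the paper's in justifying $\1_{\{\theta_m>T\}}\le \1_{[0,\lambda_m]}(N)$, in reducing the $s$-moment to the second moment via Jensen (Lemma~\ref{BFEMp} is stated only for integer $p\ge 2$), and in spelling out the order in which $\de$, $m$ and $\D t$ must be chosen.
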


\begin{proof} Let
$$e(T)=X_T-x(T).$$
Applying Young's inequality
\begin{equation*}
x^{s}y\leq \frac{\delta s}{2}x^{2}+\frac{2-s}{2\delta^{\frac{s}{2-s}}}y^{\frac{2}{2-s}}, \quad \forall x,y,\delta >0,
\end{equation*}
leads us to
\begin{eqnarray}
\E\lev e(T)\rev ^{s}
&=&
\E
\left[
\lev e(T)\rev^{s}  \1_{\{\tau _{m}>T,\vartheta _{m}>T\}}
\right]
+\E
\left[
  \lev e(T)\rev^{s} \1_{\{\tau_{m}\leq T\quad or\quad \vartheta _{m}\leq T\}}
\right]  \notag \\
    &\leq &
      2^{s-1}
        \left[
            \E
            [
             \lev \hat{X}(T)-x(T)\rev^{s}\1_{\{\tau _{m}>T,\vartheta _{m}>T\}}
            ]
                +\E
            [
                \lev X_T-\hat{X}(T)\rev^{s}\1_{\{\tau _{m}>T,\vartheta _{m}>T\}}
            ]
        \right] \notag \\
    & + &
        \frac{\delta s}{2}\E
            \left[
             \lev e(T)\rev ^{2}
            \right]
                 + \frac{2-s}{2\delta^{\frac{s}{2-s}}}
                 \PP(\tau _{m}
                    \le
                    T\quad \mathrm{or}\quad \vartheta _{m}\leq T). \notag
                    \end{eqnarray}
First, let us observe that by Lemma \ref{BFEMp} we obtain
\begin{equation*}
\E[\lev X_T-\hat{X}(T)\rev^{s}\1_{\{\tau _{m}>T,\vartheta _{m}>T\}}]
\le
C(m,s)\D t^{s}.
\end{equation*}
Given an $\epsilon >0$, by H\"{o}lder's inequality and Theorems \ref{Existence} and \ref{TL2}  , we
choose $\delta$ such that
\begin{equation*}
\frac{\delta s}{2} \E
\left[
\lev e(T)\rev ^{2}
\right]
\leq
\delta s
\E
    \left[
      \lev x(T)\rev^{2}
     +\lev X_T \rev^{2}
    \right]
    \leq
    \frac{\epsilon}{3}.
\end{equation*}
Now by (\ref{eq:lim}) there exists $N_{0}$ such that for
$m\geq N_{0}$
\begin{equation*}
\frac{2-s}{2\delta^{\frac{s}{2-s}}}
    \PP(\tau _{m}\leq T)
        \leq
            \frac{\epsilon}{3},
\end{equation*}
and finally by Lemmas \ref{BFEMp}, \ref{C1} and Theorem \ref{ProbE} we choose
$\D t$ sufficiently small, such that
\begin{eqnarray*}
2^{s-1}
\left[
    \E[\lev \hat{X}(T)-x(T)\rev^{s}\1_{\{\tau _{m}>T,\vartheta _{m}>T\}}]
        +\E[\lev X_T-\hat{X}(T)\rev^{s}\1_{\{\tau _{m}>T,\vartheta _{m}>T\}}]
\right]
    +
    \frac{2-s}{2\delta^{\frac{s}{2-s}}}
    \PP(\vartheta _{m}\leq T)
         \le
            \frac{\epsilon}{3},
\end{eqnarray*}
which completes the proof.
\end{proof}
Theorem \ref{1dconv} covers many highly non-linear SDEs, though it might be computationally expensive to find the
inverse $F^{-1}$ of the function $F(x)=x-\o f(x)\D t$.
For example, lets consider equation \eqref{eq:ex} with $\mu(x) = a + \sin(x)^{2}$, $a>0$,  that is
\begin{equation} \label{eq:electricity}
 dx(t) = ( a + \sin(x)^{2} - \a x(t)^3)dt + \be x(t)^{2}dw(t), 
\end{equation}
where $\a,\be>0$. This type of SDE could be used to model electricity prices where we need to account for
a seasonality pattern, \cite{lucia2002electricity}. 
In this situation, it is useful to split the drift coefficient
in two parts, that is
\begin{equation}
f(x)=f_{1}(x)+f_{2}(x).
\end{equation}
This allows us to introduce partial implicitness in the numerical scheme. 
In the case of \eqref{eq:electricity} we would take $f_{1}(x)= - \a x(t)^3$ and $f_{2}(x) = a + \sin(x)^{2}$.
Then a new partially implicit $\o$-EM scheme has the following form
\begin{equation} \label{eq:NewBEM}
 Y_{t_{k+1}} = Y_{t_{k}}+\o f_{1}(Y_{t_{k+1}}) \D t+(1-\o) f_{1}(Y_{t_{k}}) \D t
 + f_{2}(Y_{t_{k}})\D t+g(Y_{t_{k}})\Delta w_{t_{k}}.
\end{equation}
It is enough that $f_{1}$ satisfies Assumption \ref{os_lip} in order for scheme  \eqref{eq:NewBEM}
to be well defined. Its solution can be represented as
\[
 Y_{t_{k+1}} = H^{-1} \left(  Y_{t_{k}} + (1-\o) f_{1}(Y_{t_{k}}) \D t
 + f_{2}(Y_{t_{k}})\D t+g(Y_{t_{k}})\Delta w_{t_{k}}   \right),
\]
where
\begin{equation} \label{eq:F1}
 H(x) = x - \o f_{1}(x) \D t,
\end{equation}
All results from Sections 3 and 4 hold,
 once we replace condition (\ref{as1}) in Assumption
\ref{a0} and Assumption \ref{os_lip}  by  \eqref{eq:f1Lip} 
\eqref{con2}), respectively. 

\begin{theorem} \label{th:newH}
Let Assumption \ref{as:polynomial} hold and $\D t\in[0,(\max\{L,2\be\}\o)^{-1})$. In addition we assume that
for $x,y \in \RR^{n}$, there exist constants $L,\a,\be>0$ such that
\begin{equation} \label{eq:f1Lip}
\<x-y , f_{1}(x)-f_{1}(y) \>\le L \lev x-y\rev^{2}, 
\end{equation}
and
\begin{align} \label{con2}
&\<x,f(x)\> +\frac{1}{2}\lev g(x)\rev^{2} + \bigl[ (1-\o)\<f_{1}(x),f_{2}(x)\>
 + \frac{1}{2}\lev f_{2}(x)\rev^{2} + \frac{1}{2}(1-2\o)\lev f_{1}(x) \rev^{2}   \bigr] \D t 
 \le \a+\be\lev x\rev^{2}.
\end{align}
Then for any given $T>0$ and $s\in[1,2)$, $\o$-EM scheme \eqref{eq:NewBEM} has the following property
\begin{equation} \label{eq:error}
\lim_{\D t \rightarrow 0} \E \lev Y_{T}-x(T)\rev^{s}=0.
\end{equation}
\end{theorem}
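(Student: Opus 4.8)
The plan is to reduce Theorem~\ref{th:newH} to the convergence result already established in Theorem~\ref{1dconv}, by observing that the partially implicit scheme \eqref{eq:NewBEM}, once expressed through the map $H$ of \eqref{eq:F1}, has exactly the same structure as the $\o$-EM \eqref{oEM} expressed through $F$ of \eqref{eq:F}. Throughout, $H$ and $f_1$ take over the roles previously played by $F$ and $f$: the one-sided Lipschitz bound \eqref{eq:f1Lip} on $f_1$ replaces Assumption~\ref{os_lip} (so that, by the fixed-point theorem, $H$ is invertible for $\D t<1/(\o L)$ and \eqref{eq:NewBEM} is well defined), while \eqref{con2} replaces the monotone condition \eqref{as1} in Assumption~\ref{a0}. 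With these substitutions I would re-run the chain of results of Sections~3 and~4 — Lemmas~\ref{lem:F}, \ref{stopping}, \ref{BFEMp}, \ref{C1} and Theorems~\ref{TL2}, \ref{ProbE} — after which the proof of Theorem~\ref{1dconv} applies essentially verbatim.

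The single new ingredient, and the heart of the matter, is the identity showing that \eqref{con2} is precisely the hypothesis needed to reproduce the one-step estimate \eqref{eq:ineq}. First I would rewrite \eqref{eq:NewBEM} as
\begin{equation*}
H(Y_{t_{k+1}}) = H(Y_{t_k}) + f(Y_{t_k})\,\D t + g(Y_{t_k})\,\D w_{t_k},
\end{equation*}
which holds because $H(Y_{t_{k+1}})$ absorbs the implicit increment $\o f_1(Y_{t_{k+1}})\D t$ while, on the explicit side, $\o f_1+(1-\o)f_1+f_2=f$. Expanding $\lev H(Y_{t_{k+1}})\rev^2$ exactly as in \eqref{eq:Fexp}, using $\<H(Y_{t_k}),f(Y_{t_k})\>=\<Y_{t_k},f(Y_{t_k})\>-\o\D t\,\<f_1(Y_{t_k}),f(Y_{t_k})\>$ together with $f=f_1+f_2$, and taking conditional expectation so that the martingale increments vanish and $\E\lev g(Y_{t_k})\D w_{t_k}\rev^2=\lev g(Y_{t_k})\rev^2\D t$, I obtain (all coefficients evaluated at $Y_{t_k}$)
\begin{equation*}
\begin{split}
\E\big[\lev H(Y_{t_{k+1}})\rev^2\mid\F_{t_k}\big]
= \lev H(Y_{t_k})\rev^2 + 2\D t\Big\{&\<Y_{t_k},f\>+\tfrac12\lev g\rev^2 \\
&+\big[(1-\o)\<f_1,f_2\>+\tfrac12\lev f_2\rev^2+\tfrac12(1-2\o)\lev f_1\rev^2\big]\D t\Big\}.
\end{split}
\end{equation*}
The expression in braces is exactly the left-hand side of \eqref{con2}, so that hypothesis gives $\E[\lev H(Y_{t_{k+1}})\rev^2\mid\F_{t_k}]\le\lev H(Y_{t_k})\rev^2+2(\a+\be\lev Y_{t_k}\rev^2)\D t$, i.e.\ \eqref{eq:ineq} with $(H,Y)$ in place of $(F,X)$. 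This is the only place where the $\D t^2$ terms enter, and \eqref{con2} is tailored to control them without discarding anything (in the original proof the term $(1-2\o)\lev f\rev^2\D t^2$ was simply dropped using $\o\ge0.5$).

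Two supporting points require care, and I expect them to be the main obstacle in the bookkeeping rather than any single hard estimate. First, the analog of Lemma~\ref{lem:F} — converting a bound on $\lev H(x)\rev^2$ back to one on $\lev x\rev^2$, needed both to close the discrete Gronwall step in Theorem~\ref{TL2} and for the lower bound in Lemma~\ref{BFEMp} — can no longer be read off a monotone condition on the implicit coefficient and must instead come from \eqref{eq:f1Lip}: setting $y=0$ gives $\<x,f_1(x)\>\le(L+\tfrac12)\lev x\rev^2+\tfrac12\lev f_1(0)\rev^2$, whence $\lev H(x)\rev^2\ge(1-(2L+1)\o\D t)\lev x\rev^2-\o\D t\lev f_1(0)\rev^2$, which yields the required inequality for $\D t$ in the admissible range, with constants now depending on $L$ and $f_1(0)$. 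Second, the true-solution estimates \eqref{2.9}--\eqref{eq:lim} of Theorem~\ref{Existence}, which feed into the proof of Theorem~\ref{1dconv}, still need the genuine monotone condition \eqref{as1} on $(f,g)$; this is recovered by noting that \eqref{con2} is assumed over the whole admissible range $\D t\in[0,(\max\{L,2\be\}\o)^{-1})$, and evaluating it at $\D t=0$ collapses the bracketed term and returns exactly \eqref{as1}.

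With these in hand, the telescoping identity $\hat Y_{t_N}-Y_{t_N}=\o\big(f_1(Y_{t_0})-f_1(Y_{t_N})\big)\D t$ (the analog for the associated FBEM, defined as in \eqref{eq:FBEM}, now with $f_1$) delivers Lemma~\ref{BFEMp}; Lemma~\ref{stopping} and Theorems~\ref{TL2}, \ref{ProbE} and Lemma~\ref{C1} follow by the same stopping-time and Gronwall arguments; and the Young-inequality splitting in the proof of Theorem~\ref{1dconv} then yields the claimed convergence. The only genuinely new input is the identity above; the remainder is a careful but routine transcription, the sole extra assumption being that the polynomial growth \eqref{ass:P} is imposed on $f_1$ and $f_2$ separately (automatic in the intended applications, such as \eqref{eq:electricity}).
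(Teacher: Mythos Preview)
Your proposal is correct and follows essentially the same approach as the paper: both rewrite the scheme as $H(Y_{t_{k+1}})=H(Y_{t_k})+f(Y_{t_k})\,\D t+g(Y_{t_k})\,\D w_{t_k}$, expand $\lev H(Y_{t_{k+1}})\rev^2$ to identify the $\D t^2$-terms with the bracket in \eqref{con2}, and then argue that the supporting results of Sections~3--4 carry over by direct analogy. Your version is in fact more explicit than the paper's brief sketch (you spell out the analog of Lemma~\ref{lem:F} via \eqref{eq:f1Lip} at $y=0$, the recovery of \eqref{as1} from \eqref{con2} at $\D t=0$ for Theorem~\ref{Existence}, and the FBEM telescoping with $f_1$), but the strategy is identical.
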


\begin{proof}
In order to prove Theorem \ref{th:newH} we need to show that results from sections 3 and 4, proved for \eqref{oEM},
hold for \eqref{eq:NewBEM} under modified assumptions. The only significant difference is in the proof of
Theorem \ref{TL2} for \eqref{eq:NewBEM}.  
Due to condition \eqref{eq:f1Lip} we can show that Lemma \ref{lem:F} holds for function $H$.
Then by the definition of function $H$ in \eqref{eq:F1}, we can represent the $\o$-EM scheme \eqref{eq:NewBEM} as
\begin{equation*}
H(Y_{t_{k+1}}) = H(Y_{t_{k}})+f(Y_{t_{k}})\D t+g(Y_{t_{k}})\D w_{t_{k}}.
\end{equation*}
Consequently writing $\<H(Y_{t_{k+1}}),H(Y_{t_{k+1}})\> = \lev H(Y_{t_{k+1}}) \rev^{2}$ we obtain
\begin{align} \label{Eq:MS}
\lev H(Y_{t_{k+1}})\rev^2& =  \lev H(Y_{t_{k}})\rev^2+\lev f(Y_{t_{k}})\rev^2\Delta t^2+\lev g(Y_{t_{k}})\rev^2\D t\\
& +  2\<H(Y_{t_{k}}),f(Y_{t_{k}})\>\D t+\D M_{t_{k+1}} \nonumber \\
& = \lev H(Y_{t_{k}})\rev^2
 + \left(2\<Y_{t_{k}},f(Y_{t_{k}})\> +\lev g(Y_{t_{k}})\rev^2 \right)\D t \nonumber \\
& + \left[ 2( 1 - \o )\< f_{1}(Y_{t_{k}}), f_{2}(Y_{t_{k}})\>  +  \lev f_{2}(Y_{t_{k}}) \rev^{2}
  + (1-2\o)\lev f_{1}(Y_{t_{k}})\rev^2   \right] \D t^{2}
  + \D M_{t_{k+1}} \nonumber ,
\end{align}
where
\begin{align*}
\D M_{t_{k+1}} & =  \lev g(Y_{t_{k}}) \D w_{t_{k+1}} \rev^{2} - \lev g(Y_{t_{k}})\rev^2 \D t
               +2 \< H(Y_{t_{k}}) , g(Y_{t_{k}}) \Delta w_{t_{k+1}} \> \\
           & +  2\<f(Y_{t_{k}})\D t,g(Y_{t_{k}})\D w_{t_{k+1}}\>.
\end{align*}
Due to condition \eqref{con2} we have
\begin{align*}
\lev H(Y_{t_{k+1}})\rev^2
& \le \lev H(Y_{t_{k}})\rev^2
 + 2 \a \D t + 2 \be \, \lev Y_{t_{k}} \rev^{2}
  + \D M_{t_{k+1}}.
\end{align*}
The proof can be completed by analogy to the analysis for the $\o$-EM scheme \eqref{oEM}.
Having boundedness of moments for \eqref{eq:NewBEM} we can show that \eqref{eq:error} holds
in exactly the same way as for $\o$-EM scheme \eqref{oEM}.
\end{proof}
\subsection{Numerical Example}
In this section we perform a numerical experiment that confirms our theoretical results.
Since Multilevel Monte-Carlo simulations provide excellent motivation for our work \cite{giles2008multilevel},
here we consider the measure of error \eqref{eq:error} with $s=2$. Although, the case $s=2$
is not covered by our analysis, the numerical experiment suggests that Theorem \ref{1dconv}
still holds.
In our numerical experiment, we focus on
the error at the endpoint $T=1$, so we let
\begin{equation*}
e^{strong}_{\D t}=\E \lev x(T)-X_{T}\rev^{2}.
\end{equation*}
We consider the SDE (\ref{eq:ex})
\begin{equation*}
dx(t)=(\mu -\a x(t)^{3})dt + \be x^{2}(t)dw(t),
\end{equation*}
where
$(\mu,\a,\be) = (0.5,0.2,\sqrt{0.2})$. The assumptions
of Theorem \ref{1dconv} hold. The $\o$-EM \eqref{oEM} with $\o=1$, applied to \eqref{eq:ex} writes as
\begin{equation} \label{eq:BEMinterest}
X_{t_{k+1}}=X_{t_{k}}+(\mu-\a X_{t_{k+1}}^{3})\D t + \be X_{t_{k}}^{2}\D w_{t_{k}}.
\end{equation}
Since we employ the BEM to approximate \eqref{eq:ex},
on each step of the numerical simulation we need to find the inverse of the
function $F(x)= \a x^{3}\D t+x$. In this case we can find the inverse function
explicitly and therefore computational
complexity is not increased. Indeed, we observe that it is enough to
find the real root of the cubic equation
\begin{equation*} 
\a X^{3}_{t_{k+1}}\D t+X_{t_{k+1}}-(X_{t_{k}}
+\mu\D t+\be X_{t_{k}}^{2} \D w_{t_{k}}) = 0.
\end{equation*}
In Figure \ref{fig:stronginterest} we plot $e^{strong}_{\D t}$ against
 $\D t$ on a log-log scale. Error bars representing $95\%$ confidence intervals
 are denoted by circles.
\begin{figure}[htb]
\begin{center}
\includegraphics[scale=0.8]{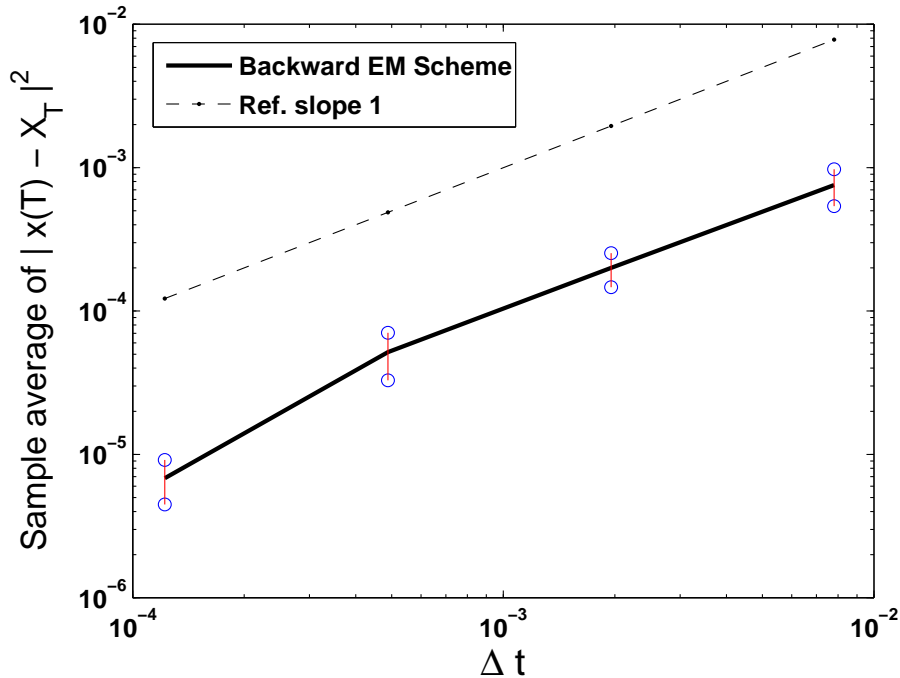}
\end{center}
        \caption{A strong error plot where the dashed line is a reference slope and the continuous line
           is the extrapolation of the error estimates for the BEM scheme.}

\label{fig:stronginterest}
\end{figure}
Although we do not know the explicit form of the solution to
\eqref{eq:ex}, Theorem \ref{1dconv} guarantees
that the BEM \eqref{eq:BEMinterest} strongly converges to the true solution. Therefore, it
is reasonable to take the BEM with the very small time step $\D t = 2^{-14}$
as a reference solution. We then compare it to the BEM evaluated with timesteps
$( 2^{1}\D t, 2^{3}\D t, 2^{5}\D t, 2^{7}\D t)$
in order to estimate the rate of convergence. Since we are using Monte Carlo method,
the sampling error decays with a rate of $1/\sqrt{M}$, $M$- is the number of sample paths.
We set $M=10000$.
From Figure \ref{fig:stronginterest} we see that there appears to exist a positive constant
such that
\begin{equation*}
e^{strong}_{\D t}\le C \D t \quad \hbox{for sufficiently small $\D t$}.
\end{equation*}
Hence, our results are consistent
with a strong order of convergence of one-half.
\section{Stability Analysis}
In this section we examine the globally almost surely asymptotic stability   of the $\o$-EM scheme \eqref{oEM}.
The stability conditions we derive are more related to the mean-square stability, 
\cite{higham2008almost,mattingly2002ergodicity}.
First, we give some preliminary analysis for the SDEs \eqref{eq:SDE}. We give conditions on the coefficients
of the SDEs (\ref{eq:SDE}) that are sufficient for the globally almost surely asymptotic stability.
Later we prove that the $\o$-EM scheme (\ref{oEM}) reproduces this asymptotic behavior very well.
\subsection{Continuous Case}
Here we present a simplified version of the stochastic LaSalle Theorem as proved in \cite{shen2006improved},
using the Lyapunov function $V(x)=\lev x\rev^{2}$.

\begin{theorem}[Mao et al.]\label{T2}
Let Assumption \ref{a0} hold. We assume that there exists a function $z \in C(\RR^{n};\RR_{+})$, such that
\begin{equation}
\<x,f(x)\> + \frac{1}{2}\lev g(x) \rev^{2}\le -z(x)  \label{c1}
\end{equation}
for all $x\in \RR^{n}$.
We then have the following assertions:
\begin{itemize}
  \item For any $x_{0}\in \RR^{n}$, the solution $x(t)$ of (\ref{eq:SDE}) has the properties that
\begin{equation*}
\limsup_{t \rightarrow \8}\lev x(t)\rev^{2}<\8 \quad \hbox{a.s} \quad \hbox{and}
\end{equation*}
\begin{equation*}
\lim_{t \rightarrow \8}z(x(t))=0 \quad \hbox{a.s}.
\end{equation*}
\end{itemize}
What is more, when $z(x)=0$ if and only if $x=0$ then
\begin{equation*}
\lim_{t \rightarrow \8}x(t)=0 \quad \hbox{a.s} \quad \hbox{$\forall$ $x_{0}\in\RR^{n}$}.
\end{equation*}
\end{theorem}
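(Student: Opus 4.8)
The plan is to prove this as a stochastic version of the LaSalle invariance principle, using the Lyapunov function $V(x)=\lev x\rev^2$ together with the nonnegative supermartingale convergence theorem. The key object is the It\^o formula applied to $V$. Since we computed in Theorem \ref{Existence} that $LV(x,t)=2(\<x,f(x)\>+\frac12\lev g(x)\rev^2)$, condition \eqref{c1} gives $LV(x,t)\le -2z(x)\le 0$. First I would write, for any $t\ge 0$,
\begin{equation*}
\lev x(t)\rev^2 = \lev x_0\rev^2 + \int_0^t LV(x(s),s)\,ds + \int_0^t 2\<x(s),g(x(s))\>\,dw(s),
\end{equation*}
and then deal with the fact that the stochastic integral is only a local martingale by localizing with the stopping times $\tau_m$ of \eqref{eq:stop}.

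The central analytic step is to show that $\lev x(t)\rev^2$ behaves like a nonnegative supermartingale. Because $\int_0^t LV\,ds$ is nonincreasing, the process
\begin{equation*}
\lev x(t\wedge\tau_m)\rev^2 + 2\int_0^{t\wedge\tau_m} z(x(s))\,ds
\end{equation*}
is a nonnegative local martingale on each $[0,\tau_m]$, and the stopping makes it a genuine supermartingale. By the nonnegative supermartingale (martingale) convergence theorem it converges a.s. to a finite limit as $t\to\8$. Combined with the a priori moment bound from Theorem \ref{Existence} and the fact that $\PP(\tau_m\le T)\to 0$ as $m\to\8$ (uniformly in $T$ on compacts, via \eqref{eq:lim}), one concludes that $\tau_m\to\8$ a.s. and that $\lev x(t)\rev^2$ itself converges a.s. to a finite random variable. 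This yields $\limsup_{t\to\8}\lev x(t)\rev^2<\8$ a.s. As a byproduct, the convergence of the supermartingale forces $\int_0^\8 z(x(s))\,ds<\8$ a.s., which is the integrability I will exploit next.

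To pass from $\int_0^\8 z(x(s))\,ds<\8$ to the pointwise conclusion $\lim_{t\to\8}z(x(t))=0$ a.s., I would argue by contradiction using pathwise continuity: on the bounded trajectory, $x(t)$ stays in a compact set, on which $z$ is uniformly continuous, so if $z(x(t))$ did not tend to $0$ it would exceed some $\e>0$ on a sequence of time-intervals of non-negligible total length, contradicting the finiteness of the integral. This step is essentially the discrete/continuous LaSalle argument and will require a little care with sample-path moduli of continuity of the diffusion. Finally, under the extra hypothesis that $z(x)=0\iff x=0$, I would combine boundedness of the path with $z(x(t))\to 0$: any limit point $x^*$ of $x(t)$ along a subsequence must satisfy $z(x^*)=0$ by continuity, hence $x^*=0$, and since the trajectory is bounded and all limit points equal $0$, we get $\lim_{t\to\8}x(t)=0$ a.s.

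\emph{The main obstacle} I anticipate is the second step: converting the integral condition $\int_0^\8 z(x(s))\,ds<\8$ into the pointwise limit $z(x(t))\to 0$. Finiteness of the integral alone does not force the integrand to zero for a general continuous function, so one must genuinely use the structure here---namely that the trajectory is a.s. bounded, so it lives in a compact set where $z$ is uniformly continuous, and that the sample paths of $x(t)$ have controlled oscillation (they are continuous, being solutions of the SDE). The delicate part is ruling out the possibility that $z(x(t))$ spikes on shorter and shorter intervals; handling this rigorously requires either a uniform-continuity/modulus-of-continuity estimate on the paths over the compact range, or a direct supermartingale-type argument. Since this is stated as a simplified restatement of the theorem proved in \cite{shen2006improved}, I would invoke that reference for this technical core rather than reprove it in full.
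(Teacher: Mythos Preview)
The paper does not prove this theorem at all: it is stated as a known result (attributed to ``Mao et al.'' and cited from \cite{shen2006improved}) and no argument is given beyond the citation. Your proposal is a reasonable sketch of the standard LaSalle-type argument and you yourself recognise at the end that the technical core should be delegated to \cite{shen2006improved}; that is exactly what the paper does, so there is nothing to compare against and your instinct to cite rather than reprove is correct here.
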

%
\subsection{Almost Sure Stability}
We begin this section with the following Lemma.
\begin{lemma} \label{L2}
Let $Z=\{Z_{n}\}_{n\in\NN}$ be a nonnegative stochastic
process with the Doob decomposition
${Z_{n}}=Z_{0}+A_{n}^{1}-A_{n}^{2}+M_{n}$, where
$A^{1}=\{A_{n}^{1}\}_{n\in\NN}$ and $A^{2}=\{A_{n}^{2}\}_{n\in\NN}$
are a.s. nondecreasing, predictable processes with
$A_{0}^{1}=A_{0}^{2}=0$, and $M=\{M_{n}\}_{n\in\NN}$ is local
$\{ \F_{n} \}_{n\in\NN}$-martingale with $M_{0}=0$.
Then
\begin{equation*}
\left\{ \lim_{n \rightarrow \8} A^{1}_{n}<\infty \right\} \subseteq \left\{
\lim_{n \rightarrow \8} A^{2}_{n}<\infty \right\} \cap \left\{
\lim_{n \rightarrow \8} Z_{n}\hbox{ exists and is finite} \right\} \quad \hbox{a.s.}
\end{equation*}
\end{lemma}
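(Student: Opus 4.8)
The plan is to read this as a discrete-time semimartingale convergence statement of Robbins--Siegmund type and to reduce it, after two localisations, to the classical nonnegative supermartingale convergence theorem. Throughout, predictability means $A^1_n,A^2_n$ are $\F_{n-1}$-measurable. First I would dispose of integrability issues caused by $Z_0$: the sets $\{Z_0\le k\}$ lie in $\F_0$ and increase to $\W$ a.s., and conditioning on such an $\F_0$-set leaves the Doob decomposition and all (super)martingale properties intact, so I may assume $Z_0$ is bounded, hence integrable. Next comes the key stopping-time reduction on $A^1$: for each integer $j\ge 1$ set $\tau_j=\inf\{n\ge 0: A^1_{n+1}>j\}$, which is a stopping time precisely because $A^1$ is predictable. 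Since $A^1$ is nondecreasing with $A^1_0=0$, one has $A^1_{n\wedge\tau_j}\le j$ for all $n$, and $\{\lim_n A^1_n<\8\}=\bigcup_j\{\tau_j=\8\}$. It therefore suffices to prove that on each $\{\tau_j=\8\}$ both $\lim_n A^2_n<\8$ and $\lim_n Z_n$ exists finitely.

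On the stopped process I would introduce $R_n:=Z_{n\wedge\tau_j}+\bigl(j-A^1_{n\wedge\tau_j}\bigr)$. Nonnegativity of $Z$ together with $A^1_{n\wedge\tau_j}\le j$ gives $R_n\ge 0$, while the decomposition rewrites this as $R_n=(Z_0+j)-A^2_{n\wedge\tau_j}+M_{n\wedge\tau_j}$, i.e. a constant plus a nonincreasing predictable process plus a local martingale. Thus $R$ is a nonnegative local supermartingale with integrable initial value $R_0=Z_0+j$, and a nonnegative local supermartingale whose initial value is integrable is in fact a genuine supermartingale. I expect this upgrade from local to true supermartingale, and the associated integrability bookkeeping, to be the main obstacle; the argument is to take a localising sequence $\rho_i\uparrow\8$ for $M$, observe from $R^{\rho_i}\ge 0$ that $0\le A^2_{n\wedge\tau_j\wedge\rho_i}\le Z_0+j+M_{n\wedge\tau_j\wedge\rho_i}$ with $M^{\tau_j\wedge\rho_i}$ a true martingale (so the right-hand side is integrable), which yields both integrability of $R^{\rho_i}$ and its supermartingale inequality, and then to pass $i\to\8$ by conditional Fatou. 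Everything after this point is routine.

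With $R$ a genuine nonnegative supermartingale, Doob's convergence theorem gives $R_n\to R_\8$ a.s. with $R_\8<\8$. On $\{\tau_j=\8\}$ the process $A^1_n$ increases to a finite limit $\le j$, so $Z_n=R_n-j+A^1_n$ converges to a finite limit, which is the second desired conclusion. For the finiteness of $A^2$ I would return to the stopped supermartingales $R^{\rho_i}$: for each fixed $i$ this is a genuine nonnegative supermartingale whose Doob compensator is exactly $A^2_{n\wedge\tau_j\wedge\rho_i}$ (here $M^{\tau_j\wedge\rho_i}$ is a true martingale, so the predictable part is unambiguous), whence $\E\bigl[A^2_{\tau_j\wedge\rho_i}\bigr]=\E[R_0]-\lim_n\E[R^{\rho_i}_n]\le\E[R_0]=\E[Z_0]+j$. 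Letting $i\to\8$ and using $\tau_j\wedge\rho_i\uparrow\tau_j$ together with monotone convergence gives $\E\bigl[A^2_{\tau_j}\bigr]\le\E[Z_0]+j<\8$, so $A^2_{\tau_j}<\8$ a.s.; on $\{\tau_j=\8\}$ this says exactly $\lim_n A^2_n<\8$.

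Finally I would assemble the pieces. Taking the union over $j$ shows that on $\bigcup_j\{\tau_j=\8\}=\{\lim_n A^1_n<\8\}$ both $\lim_n A^2_n<\8$ and $\lim_n Z_n$ exists and is finite, a.s.; taking the further union over $k$ removes the auxiliary assumption that $Z_0$ is bounded (since $\{Z_0\le k\}\uparrow\W$ a.s.). This establishes the claimed inclusion. The only genuinely delicate point is the passage from the local martingale $M$ to honest supermartingales, which is why I would isolate the two facts above — the upgrade of a nonnegative local supermartingale with integrable initial value to a supermartingale, and the identification of the compensator of $R^{\rho_i}$ with $A^2_{\cdot\wedge\tau_j\wedge\rho_i}$ — as the load-bearing steps.
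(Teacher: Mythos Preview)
Your argument is correct and follows the standard Robbins--Siegmund route: localise $A^1$ via the predictable stopping time $\tau_j$, add the slack $j-A^1_{n\wedge\tau_j}$ to obtain a nonnegative local supermartingale $R_n$, upgrade it to a true supermartingale by Fatou along a localising sequence for $M$, and then read off both conclusions from Doob's convergence theorem and a moment bound on the compensator. The two steps you flag as load-bearing (the local-to-true upgrade and the identification of the compensator of $R^{\rho_i}$) are indeed the only places requiring care, and you handle them properly.

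As for comparison with the paper: there is nothing to compare against. The paper does \emph{not} prove this lemma; it states it, attributes it to Liptser and Shiryaev, and adds only the one-line remark that ``this lemma combines the Doob decomposition and the martingales convergence theorem.'' Your write-up is a genuine self-contained proof of the cited result, so it supplies strictly more than the paper does here.
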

The original lemma can be found in Liptser and Shiryaev
\cite{liptser1989theory}. The reader can notice that this lemma combines
the Doob decomposition and the martingales convergence theorem.
Since we use the Lyapunov function $V(x)=\lev x \rev^2$, our results
extend the mean-square stability
for linear systems, Higham \cite{higham2000stability,higham2001mean},
to a highly non-linear setting.
The next theorem demonstrates that there exists a discrete counterpart of Theorem
\ref{T2} for the $\o$-EM scheme \eqref{oEM}.
\begin{theorem} \label{stability}
Let Assumptions \ref{a0}, \ref{os_lip} and \ref{as:polynomial} hold. Assume that there exists a function  $z \in C(\RR^{n};\RR_{+})$ such that
for all $x\in\RR^n$ and for all $\D t\in(0,(\max\{L,2\be\}\o)^{-1})$,
\begin{equation} \label{stab-EM}
\<x,f(x)\>+\frac{1}{2}\lev g(x)\rev
^{2}+\frac{(1-2\o)}{2}\lev f(x) \rev^{2} \D t\leq -z( x).
\end{equation}
Then the $\o$-EM solution defined by (\ref{oEM}), obeys
\begin{equation}
\limsup_{k\rightarrow \infty }\lev X_{t_{k}}\rev^{2}<\8 \quad \hbox{a.s.,}  \label{cn3}
\end{equation}
\begin{equation} \label{eq:wbound}
\lim_{k\rightarrow \infty }z\left( X_{t_{k}}\right) =0 \text{ a.s.}
\end{equation}
If additionally $z(x)=0$ iff $x=0$, then
\begin{equation} \label{eq:Ostability}
\lim_{k\rightarrow \infty }  X_{t_{k}} =0\text{ a.s.}
\end{equation}
\end{theorem}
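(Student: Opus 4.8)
The plan is to track the nonnegative process $Z_k := \lev F(X_{t_k})\rev^2$, where $F(x)=x-\o f(x)\D t$, rather than $\lev X_{t_k}\rev^2$ directly, and to cast it into the Doob-decomposition form required by Lemma \ref{L2}. The motivation is that the \emph{exact} recursion for $\lev F(X_{t_{k+1}})\rev^2$ has already been computed in the proof of Theorem \ref{TL2}: from \eqref{eq:Fexp},
\begin{equation*}
\lev F(X_{t_{k+1}})\rev^2 = \lev F(X_{t_{k}})\rev^2 + \left(2\<X_{t_{k}},f(X_{t_{k}})\> +\lev g(X_{t_{k}})\rev^2 + (1-2\o)\lev f(X_{t_{k}})\rev^2\D t\right)\D t + \D M_{t_{k+1}},
\end{equation*}
where $\{\sum_{j<k}\D M_{t_{j+1}}\}_k$ is a local $\{\F_{t_k}\}$-martingale. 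The key observation is that the bracketed drift coefficient is precisely twice the left-hand side of the stability condition \eqref{stab-EM}, so \eqref{stab-EM} yields
\begin{equation*}
2\<X_{t_{k}},f(X_{t_{k}})\> +\lev g(X_{t_{k}})\rev^2 + (1-2\o)\lev f(X_{t_{k}})\rev^2\D t \le -2z(X_{t_k}) \le 0.
\end{equation*}

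I would then set $Z_0=\lev F(x_0)\rev^2$, $A^1_k\equiv 0$, $M_k=\sum_{j=0}^{k-1}\D M_{t_{j+1}}$, and
\begin{equation*}
A^2_k = -\sum_{j=0}^{k-1}\left(2\<X_{t_{j}},f(X_{t_{j}})\> +\lev g(X_{t_{j}})\rev^2 + (1-2\o)\lev f(X_{t_{j}})\rev^2\D t\right)\D t.
\end{equation*}
Each summand is nonnegative (by the inequality above) and $\F_{t_{k-1}}$-measurable, so $A^2$ is an a.s.\ nondecreasing predictable process with $A^2_0=0$, and $Z_k=Z_0+A^1_k-A^2_k+M_k$. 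Since $A^1\equiv 0$ gives $\{A^1_\infty<\8\}=\Omega$, Lemma \ref{L2} yields, almost surely, that $A^2_\infty<\8$ and that $\lim_{k\to\8}Z_k$ exists and is finite.

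The two main assertions are then read off. From $A^2_\infty<\8$ and the comparison $2z(X_{t_j})\D t\le$ (the $j$-th summand of $A^2$), the series $\sum_{j=0}^{\8} z(X_{t_j})\D t$ converges a.s.; as $\D t>0$ is fixed and every term is nonnegative, this forces $z(X_{t_k})\to 0$ a.s., which is \eqref{eq:wbound}. From the finiteness of $\lim_k\lev F(X_{t_k})\rev^2$ together with Lemma \ref{lem:F}, the bound $\lev X_{t_k}\rev^2\le(1-2\be\o\D t)^{-1}[\lev F(X_{t_k})\rev^2+2\o\a\D t]$ is bounded in $k$ a.s., giving \eqref{cn3}. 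Finally, for \eqref{eq:Ostability} I would argue pathwise on the full-probability event on which both \eqref{cn3} and \eqref{eq:wbound} hold: fixing such an $\w$, the sequence $\{X_{t_k}(\w)\}$ is bounded and $z(X_{t_k}(\w))\to 0$, so if $X_{t_k}(\w)\not\to 0$ some subsequence stays in a region $\lev x\rev\ge\e$; by boundedness a further subsequence converges to some $x^*$ with $\lev x^*\rev\ge\e$, and continuity of $z$ gives $z(X_{t_{k_j}}(\w))\to z(x^*)>0$ (as $z$ vanishes only at the origin), contradicting $z(X_{t_k}(\w))\to 0$.

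I expect the main obstacle to be conceptual rather than computational: recognising $\lev F(X_{t_k})\rev^2$ (and not $\lev X_{t_k}\rev^2$) as the correct Lyapunov-type quantity, so that the drift aligns exactly with the $(1-2\o)\lev f\rev^2\D t$ correction appearing in \eqref{stab-EM} and the decomposition feeds directly into Lemma \ref{L2}. Once this identification is made, the remaining work is the standard semimartingale-convergence bookkeeping and the LaSalle-type compactness argument, neither of which presents real difficulty.
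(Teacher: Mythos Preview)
Your proposal is correct and follows essentially the same approach as the paper: track $\lev F(X_{t_k})\rev^2$, use the exact recursion \eqref{eq:Fexp} to obtain a Doob decomposition with nonpositive predictable drift under \eqref{stab-EM}, apply Lemma \ref{L2}, and then transfer the conclusions to $\lev X_{t_k}\rev^2$ and $z(X_{t_k})$ via Lemma \ref{lem:F}. In fact your write-up is slightly more complete than the paper's, since you spell out the pathwise compactness argument for \eqref{eq:Ostability}, which the paper leaves implicit.
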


\begin{proof}
By \eqref{eq:Fexp} we have
\begin{align*} 
\lev F(X_{t_{k+1}})\rev^2 = &  \lev F(X_{t_{k}}) \rev^{2}
+ \left(2\<X_{t_{k}},f(X_{t_{k}})\> +\lev g(X_{t_{k}})\rev^2 \right)\Delta t \nonumber \\
& +  (1-2\o)\lev f(X_{t_{k}})\rev^2\D t^2+\D M_{t_{k+1}} \nonumber ,
\end{align*}
where
\begin{align*}
\D M_{t_{k+1}} & =  \lev g(X_{t_{k}}) \D w_{t_{k+1}} \rev^{2} - \lev g(X_{t_{k}})\rev^2 \D t
               +2 \< F(X_{t_{k}}) , g(X_{t_{k}}) \Delta w_{t_{k+1}} \> \\
           & +  2\<f(X_{t_{k}})\Delta t,g(X_{t_{k}})\D w_{t_{k+1}}\>,
\end{align*}
so $\sum_{k=0}^{N}\D M_{t_{k+1}}$ is a local martingale
due to Assumption \ref{as:polynomial} and Lemma \ref{stopping}.
Hence, we have obtained the decomposition required to apply Lemma \ref{L2}, that is
\begin{equation*}
\lev F(X_{t_{N+1}})\rev ^{2}=\lev F(X_{t_{0}})\rev^2-\sum_{k=0}^{N}A_{t_{k}}\D t+\sum_{k=0}^{N}\D M_{k}.
\end{equation*}
where
\begin{equation} \label{eq: A}
A_{t_{k}} = -\left( \left( 2\<X_{t_{k}},f(X_{t_{k}})\> +\lev g(X_{t_{k}})\rev^2 \right)
+(1-2\o)\lev f(X_{t_{k}})\rev^2\D t \right).
\end{equation}
By condition \eqref{stab-EM}, $\sum_{k=0}^{N}A_{t_{k}}\D t$ is nondecreasing. Hence by Lemma \ref{L2} we arrive at
\begin{equation} \label{im2}
\lim_{k\rightarrow \infty }\lev F(X_{t_{k}})\rev ^{2}<\infty.
\end{equation}
Consequently, by Lemma \ref{lem:F} we arrive at
\begin{equation*}
\limsup_{k\rightarrow \infty }\lev  X(t_{k})\rev ^{2}<\8 \quad \hbox{a.s.}.
\end{equation*}
By Lemma \ref{L2},
\begin{align*}
\sum_{k=0}^{\8} z(X_{t_{k}}) \D t\le\sum_{k=0}^{\8}A_{t_{k}}\D t <\8 \quad\text{a.s},
\end{align*}
which implies
\begin{equation*}
\lim_{k\rightarrow \8}z( X_{t_{k}})=0 \quad\text{a.s}.
\end{equation*}
This completes the proof of \eqref{eq:wbound} and \eqref{eq:Ostability}.
\end{proof}
\appendix
\section{Proof of Lemma~\ref{stopping}}
\begin{proof}
By \eqref{eq:ineq} we obtain 
\begin{align}
\lev F(X_{t_{k}})\rev^2
 \le & \lev F(X_{t_{k-1}})\rev^2 +  2 \a \D t + 2 \be \lev X_{t_{k-1}}  \rev^{2}  \D t +  \D M_{t_{k}} , \nonumber
\end{align}
where
\begin{align*}
\D M_{t_{k}} & =  \lev g(X_{t_{k-1}}) \D w_{t_{k}} \rev^{2} - \lev g(X_{t_{k-1}})\rev^2 \D t
               +2 \< F(X_{t_{k-1}}) , g(X_{t_{k-1}}) \Delta w_{t_{k}} \> \\
           & +  2\<f(X_{t_{k-1}})\D t,g(X_{t_{k-1}})\D w_{t_{k}}\>.
\end{align*}
Using the basic inequality $(a_{1} + a_{2} + a_{3}+ a_{4})^{p/2}
\le 4^{p/2-1} ( a_{1}^{p/2} + a_{2}^{p/2} + a_{3}^{p/2} + a_{4}^{p/2}   ) $,
where $a_{i}\ge0$, we obtain
\begin{align}
\lev F(X_{t_{k}})\rev^p
 \le & 4^{p/2-1} \left( \lev F(X_{t_{k-1}})\rev^p +  (2 \a \D t)^{p/2}
 + (2\be)^{p/2} \lev X_{t_{k-1}} \rev^{p}  \D t^{p/2} + \mid \D M_{t_{k}} \mid^{p/2} \right).
\end{align}
As a consequence
\begin{align*}
\E \left[ \lev F(X_{t_{k}})\rev^p \1_{[0,\l_{m}]}(k) \right]
 \le & 4^{p/2-1} \biggl( \E \left[ \lev F(X_{t_{k-1}})\rev^p \1_{[0,\l_{m}]}(k) \right]  +  (2\a\D t )^{p/2}
+ (2\be m^{2} \D t)^{p/2} \\
 &  + \E \left[ \mid \D M_{t_{k}} \mid^{p/2} \1_{[0,\l_{m}]}(k) \right] \biggr).
\end{align*}
Due to Assumption \ref{as1} we can bound $\lev F(x)\rev^p$ and $\lev g(x)\rev$ for $\lev x\rev<m$.
Whence, there exists a constant $C(m,p)$, such that
\begin{align*}
& \E \biggl[ \mid \D M_{t_{k}} \mid^{p/2}  \biggr] \1_{[0,\l_{m}]}(k) \\
& \le 4^{p/2-1} \E \biggl[  \lev g(X_{t_{k-1}}) \D w_{k} \rev^{p} +\lev g(X_{t_{k-1}})\rev^p \D t^{p/2}
               + ( 2 \lev F(X_{t_{k-1}}) \rev \lev g(X_{t_{k-1}}) \Delta w_{t_{k}} \rev )^{p/2} \\
           & +  ( 2 \lev f(X_{t_{k-1}})\D t \rev \lev g(X_{t_{k-1}})\D w_{t_{k}} \rev )^{p/2}   \biggr] \1_{[0,\l_{m}]}(k) \\
& \le C(m,p) \E \biggl[ 1 +  \lev g(X(t_{k-1})) \D w_{k} \rev^{p} \biggr] \1_{[0,\l_{m}]}(k),
\end{align*}
By H\"{o}lder's inequality
\begin{align*}
& \E \biggl[ \mid \D M_{t_{k}} \mid^{p/2}  \biggr] \1_{[0,\l_{m}]}(k) \\
& \le C(m,p)  \biggl[ 1 +  \bigl(\E\bigl[\lev g(X_{t_{k-1}}) \rev^{2p}\1_{[0,\l_{m}]}(k)\bigr]\bigr)^{1/2}
	\bigl(\E\lev\Delta w_{t_{k-1}}\rev^{2p}\bigr)^{1/2}
           \biggr].
\end{align*}
Hence
\begin{align*}
\E \left[\lev F(X_{t_{k}})\rev^{p}\1_{[0,\l_{m}]}(k)\right]
& \leq
C(m,p) \biggl( 1 
        +\bigl(\E\bigl[\lev g(X_{t_{k-1}}) \rev^{p}\1_{[0,\l_{m}]}(k)\bigr]^{2}\bigr)^{1/2}
	\bigl(\E\lev\Delta w_{t_{k-1}}\rev^{2p}\bigr)^{1/2} \biggr)
\end{align*}
Since there exists a positive constant $C(p)$,
such that $\E\lev\Delta w_{t_{k-1}}\rev^{2p}<C(p)$, we obtain
\begin{align*}
\E\left[\lev F (X_{t_{k}} ) \rev ^{p}\1_{[0,\l_{m}]}(k)\right]<C(m,p).
\end{align*}
We conclude the assertion by applying Lemma \ref{lem:F}.
\end{proof}
\section{Proof of Theorem~\ref{ProbE}}
\begin{proof} 
By the It\^{o} formula
\begin{align*}
\lev \hat{X}(T \wedge \vartheta _{m}) \rev^{2}
& = \lev x_{0} \rev^{2} +
\int_{0}^{T \wedge \vartheta _{m}}
\left( 2\< \hat{X}(s),f(X_{\eta(s)}) \>  + \trace[g^{T}(X_{\eta(s)})I_{n\times n}g(X_{\eta(s)})]\right) ds\\
 &   + 2 \int_{0}^{T \wedge \vartheta _{m}}  \<\hat{X}(s),g(X_{\eta(s)})\>dw(s) \\
& = \lev x_{0} \rev^{2} +
\int_{0}^{T \wedge \vartheta _{m}}
\left( 2\< \hat{X}(s)-X_{\eta(s)}+X_{\eta(s)},f(X_{\eta(s)}) \>
 + \lev g(X_{\eta(s)}) \rev^{2} \right) ds\\
 &   + 2 \int_{0}^{T \wedge \vartheta _{m}}  \<\hat{X}(s),g(X_{\eta(s)})\>dw(s) \\
& \le \lev x_{0} \rev^{2} +
\int_{0}^{T \wedge \vartheta _{m}}
\left( 2\< X_{\eta(s)},f(X_{\eta(s)}) \>
 + \lev g(X_{\eta(s)}) \rev^{2} \right) ds \\
  & +  2 \int_{0}^{T \wedge \vartheta _{m}} \lev  \hat{X}(s)-X_{\eta(s)} \rev \lev f(X_{\eta(s)}) \rev  ds \\
 &   + 2 \int_{0}^{T \wedge \vartheta _{m}}  \<\hat{X}(s),g(X_{\eta(s)})\>dw(s).
\end{align*}
By Assumption \ref{a0}, for $\lev x\rev \leq m$
\begin{equation} \label{eq:lipf}
\lev f(x) \rev^{2} \le 2 (\lev f(x)-f(0) \rev^{2}+\lev f(0)\rev^{2})\le 2(C(m)\lev x \rev^{2}+\lev f(0)\rev^{2}),
\end{equation}
\begin{equation} \label{eq:lipg}
\lev g(x) \rev^{2} \le 2 (\lev g(x)-g(0) \rev^{2}+\lev g(0)\rev^{2})\le 2(C(m)\lev x \rev^{2}+\lev g(0)\rev^{2}),
\end{equation}
and
\begin{align*}
\E
  \lev \hat{X}(T\wedge \vartheta _{m}) \rev^{2}
& \leq
 \lev x_{0} \rev^{2}+2\a T
  +2 \be\,\E \int_{0}^{T\we \vartheta _{m}} \lev X_{\eta(s)} -\hat{X}(s) + \hat{X}(s) \rev^{2} ds
  +C(m)\E\int_{0}^{T\wedge \vartheta_{m}}\lev X_{\eta(s)}-\hat{X}(s)\rev ds.
\end{align*}
Using the basic inequality $(a-b+c)^{2}\le 2(\lev a-b \rev^{2} + \lev c \rev^2)$ and 
the fact that $\int_{0}^{T\wedge \vartheta_{m}}\lev X_{\eta(s)}-\hat{X}(s)\rev^{2} ds \le
C(m)\int_{0}^{T\wedge \vartheta_{m}}\lev X_{\eta(s)}-\hat{X}(s)\rev ds$,
we obtain
\begin{align} \label{eq:probe1}
\E
  \lev \hat{X}(T\wedge \vartheta _{m}) \rev^{2}
 \leq & \,
 \lev x_{0} \rev^{2}+2\a T
  +4\be \,\int_{0}^{T}\E \lev \hat{X}(s\wedge \vartheta _{m}) \rev^{2} ds   \notag \\
& + 4 \be \E\int_{0}^{T\wedge \vartheta_{m}}\lev X_{\eta(s)}-\hat{X}(s)\rev^{2} ds
 +C(m)\E\int_{0}^{T\wedge \vartheta_{m}}\lev X_{\eta(s)}-\hat{X}(s)\rev ds \notag \\
 \leq & \,
 \lev x_{0} \rev^{2}+2\a T
  +4\be \int_{0}^{T}\E \lev \hat{X}(s\wedge \vartheta _{m}) \rev^{2} ds
  +C(m)(4\be +1)\E\int_{0}^{T\wedge \vartheta_{m}}\lev X_{\eta(s)}-\hat{X}(s)\rev ds.
\end{align}
Since $\l_{m} \ge \vartheta_{m}$ a.s., Lemma \ref{BFEMp} gives the following bound
\begin{equation} \label{eq:hathat}
\E \int_{0}^{T\wedge \vartheta_{m}}\lev X_{\eta(s)}-\hat{X}_{\eta(s)}\rev ds
\le C(m,T)\D t .
\end{equation}
To bound the term
$\E \int_{0}^{T\wedge \vartheta_{m}}\lev \hat{X}_{\eta(s)}-\hat{X}(s)\rev ds$ in \eqref{eq:probe1},
 first we observe that
\[
\lev \hat{X}_{\eta(s)} -\hat{X}(s)\rev \1_{[t_{k},t_{k+1})}(s)
= \lev \int_{t_{k}}^{s}f(X_{t_{k}})dh+ \int_{t_{k}}^{s}g(X_{t_{k}})dw(h) \rev \1_{[t_{k},t_{k+1})}(s).
\]
Then, by \eqref{eq:lipf} and \eqref{eq:lipg}
\begin{equation} \label{eq:hat2}
\E \int_{0}^{T\wedge \vartheta _{m}}  \lev \hat{X}_{\eta(s)} -\hat{X}(s)\rev  ds \leq C(m,T)\D t^{\frac{1}{2}},
\end{equation}
where $C(m,T)>0$ is constant. Combining \eqref{eq:hathat} and \eqref{eq:hat2} leads us to
\begin{align}
E\int_{0}^{T\wedge \vartheta_{m}}\lev X_{\eta(s)}-\hat{X}(s)\rev ds
    & \le
      \E \int_{0}^{T\wedge \vartheta_{m}}   \lev \hat{X}_{\eta(s)} -\hat{X}(s)\rev ds  \notag\\
    & + \E \int_{0}^{T\wedge \vartheta_{m}} \lev X_{\eta(s)}-\hat{X}_{\eta(s)}\rev ds \notag \\
& \le
C(m,T)\D t^{\frac{1}{2}}.  \label{eq:numbound}
\end{align}
Therefore
\begin{equation*}
\E \lev \hat{X}(T\wedge \vartheta _{m})\rev^{2}
\leq
\lev x_{0} \rev^{2} + 2\a T + C(m,T)\D t^{\frac{1}{2}}
    +4\be\int_{0}^{T}\E \lev \hat{X}(s\wedge \vartheta _{m}) \rev^{2} ds.
\end{equation*}
By Gronwall's inequality
\begin{equation} \label{eq:hattoEM}
\E \lev \hat{X}(T\wedge \vartheta _{m})\rev^{2}
\leq
[\lev x_{0} \rev^{2} + 2\a T + C(m,T)\D t^{\frac{1}{2}}]\exp(4\be T ).
\end{equation}
Now we will find the lower bound for $\lev \hat{X}(\vartheta _{m})\rev^{2}$.
From the definition of the stopping time $\vartheta _{m}$, if  $\inf \{t>0: \lev \hat{X}(t) \rev \ge m\} \le \inf \{t>0:  \lev X_{\eta(t)}\rev >m \}$
then $\lev \hat{X}(\vartheta _{m})\rev^{2} = m^2$. In the alternative case, where 
$\inf \{t>0: \lev \hat{X}(t) \rev \ge m\} > \inf \{t>0:  \lev X_{\eta(t)}\rev >m \}$, we have
$ \lev X_{\vartheta _{m}}\rev^2 >m^2$. From Lemmas \ref{lem:F} and \ref{BFEMp} we arrive at
\[
 \lev \hat{X}(\vartheta _{m}) \rev^{2} \ge \frac{1}{2}\left( ( 1 - 2 \be \o \D t ) \lev X_{\vartheta _{m}} \rev^{2} 
- 2 \o \a \D t \right) - \lev \o f(x_{0})\D t \rev^{2}. 
\]
Hence there exist positive constants $c_{1}$ and $c_{2}$ such that
\[
 \lev \hat{X}(\vartheta _{m}) \rev^{2} > c_{1} m^{2} - c_{2}\D t. 
\]
We have
\[
 \E \lev \hat{X}(T\wedge \vartheta _{m})\rev^{2}
 \ge \E \left[ \1_{\{\vartheta _{m}<T\}} \lev \hat{X}(\vartheta _{m})\rev^{2} \right]
\ge  \PP(\vartheta _{m}<T) ( c_{1} m^{2} - c_{2}\D t ). 
\]

which implies that
\begin{equation*}
\PP(\vartheta _{m}<T)
\le
\frac{ [ \lev x_{0} \rev^{2} + 2\a T + C(m,T)\D t^{1/2}]\exp(4\be T )}  { c_{1} m^{2} - c_{2}\D t}.
\end{equation*}
Now, for any given $\epsilon>0$, we choose $N_{0}$ such that for any $m\ge N_{0}$
\begin{equation*}
\frac{[\lev x_{0} \rev^{2} + 2\a T]\exp(4\be T ) }{c_{1} m^{2} - c_{2}\D t}\le \frac{\epsilon}{2}.
\end{equation*}
Then, we can choose $\D t_{0}=\D t_{0}(m)$, such that for any $\D t\le\D t_{0}$
\begin{equation*}
\frac{\exp(4\be T )C(m,T)\D t^{1/2}}{c_{1} m^{2} - c_{2}\D t}
\leq
\frac{\epsilon}{2},
\end{equation*}
whence $\PP(\vartheta _{m}<T)\le \epsilon$ as required.
\end{proof}
\section{Proof of Lemma~\ref{C1}}
\begin{proof}
It is useful to observe that since the constant $C(T,m)$ in \eqref{eq:compactcon} depends on $m$, we can
prove the theorem in a similar way as in the classical case where coefficients $f$ and $g$ in \eqref{eq:SDE}
obey the global Lipschitz condition \cite{higham2003strong,kloeden1992numerical}. Nevertheless, for completeness of
the exposition we present the sketch of the proof.
For any $T_{1}\in[0,T]$, by H\"{o}lder's and
Burkholder-Davis-Gundy's inequalities
\begin{align*}
&\E
\left[
\sup_{0\leq t\leq T_{1}}\lev
\hat{X}(t\wedge\theta_{m})-x(t\wedge \theta _{m})\rev^{2}
\right]  \\
  &\leq
   2
   \Biggl(
    T \E\int_{0}^{T_{1}\wedge \theta_{m}} \lev f(X_{\eta(s)})-f(x(s)) \rev^{2}ds
    + 4 \E\int_{0}^{T_{1}\wedge \theta _{m}} \lev g(X_{\eta(s)})-g(x(s)) \rev^{2}ds
   \Biggr),
\end{align*}
By Assumption \ref{a0} there exists a constant $C(m)$
\begin{align*}
&\E
\left[
\sup_{0\leq t\leq T_{1}}\lev\hat{X}(t\wedge\theta_{m})-x(t\wedge \theta _{m})\rev^{2}
\right]  \\
&\leq
2 C(m) \Biggl(
T \, \E\int_{0}^{T_{1}\wedge\theta _{m}}\lev X_{\eta(s)}-x(s)\rev^{2} ds
+ 4\,\E \int_{0}^{T_{1}\wedge \theta _{m}} \lev X_{\eta(s)}-x(s)\rev^{2} ds \Biggr)\\
&\leq
4 C(m)
\Biggl(
T
  \E\int_{0}^{T_{1}\wedge\theta _{m}} \left[ \lev \hat{X}(s)-x(s)\rev^{2}
+\lev X_{\eta(s)}-\hat{X}(s) \rev^{2} \right] ds\\
  & +
    4\,\E\int_{0}^{T_{1}\wedge\theta _{m}}\left[ \lev \hat{X}(s)-x(s)\rev^{2}
  + \lev X_{\eta}(s)-\hat{X}(s) \rev ^{2} \right] ds \Biggr)\\
     &\leq
     4 C(m) ( T+4 )
      \E\int_{0}^{T_{1}}\lev \hat{X}(s\wedge\theta_{m})-x(s\wedge \theta _{m})\rev^{2}ds\\
       & +
  4 C(m) ( T+4 )
       \E\int_{0}^{T_{1}\wedge\theta _{m}}\lev X_{\eta}(s)-\hat{X}(s) \rev^{2}ds .
\end{align*}
By the same reasoning which gave estimate \eqref{eq:numbound}, we can deduce that
\begin{align*}
\E\int_{0}^{T_{1}\wedge\theta _{m}}\lev X_{\eta}(s)-\hat{X}(s) \rev^{2}ds
\le
C(m,T)\D t.
\end{align*}
Hence
\begin{align*}
&\E
\left[
\sup_{0\leq t\leq T_{1}}\lev \hat{X}(t\wedge\theta_{m})-x(t\wedge \theta _{m})\rev ^{2}
\right]  \\
   &\leq  4 C(m) ( T+4 ) C(m,T)\D t + 4 C(m) ( T+4 )
 \E \left[
           \int_{0}^{T_{1}}\sup_{0\leq t\leq s}\lev \hat{X}(t\wedge \theta _{m})-x(t\wedge \theta_{m})\rev ^{2} ds
      \right].
\end{align*}
The statement of the theorem holds by the Gronwall inequality.
\end{proof}

\bibliographystyle{plain}
\bibliography{upthesis}

\begin{thebibliography}{10}

\bibitem{ahn1999parametric}
D.H. Ahn and B.~Gao.
\newblock {A parametric nonlinear model of term structure dynamics}.
\newblock {\em Review of Financial Studies}, 12(4):721, 1999.

\bibitem{ait1996testing}
Y.~Ait-Sahalia.
\newblock {Testing continuous-time models of the spot interest rate}.
\newblock {\em Review of Financial Studies}, 9(2):385--426, 1996.

\bibitem{bahar2004stochastic}
A.~Bahar and X.~Mao.
\newblock {Stochastic delay population dynamics}.
\newblock {\em International Journal of Pure and Applied Mathematics},
  11:377--400, 2004.

\bibitem{baker2005exponential}
C.T.H. Baker and E.~Buckwar.
\newblock {Exponential stability in p-th mean of solutions, and of convergent
  Euler-type solutions, of stochastic delay differential equations}.
\newblock {\em Journal of Computational and Applied Mathematics},
  184(2):404--427, 2005.

\bibitem{berkaoui2007euler}
A.~Berkaoui, M.~Bossy, and A.~Diop.
\newblock {Euler scheme for SDEs with non-Lipschitz diffusion coefficient:
  strong convergence}.
\newblock {\em ESAIM: Probability and Statistics}, 12:1--11, 2007.

\bibitem{broadie1997continuity}
M.~Broadie, P.~Glasserman, and S.~Kou.
\newblock {A continuity correction for discrete barrier options}.
\newblock {\em Mathematical Finance}, 7(4):325--349, 1997.

\bibitem{buchmann2005simulation}
F.M. Buchmann.
\newblock {Simulation of stopped diffusions}.
\newblock {\em Journal of Computational Physics}, 202(2):446--462, 2005.

\bibitem{campbell1998econometrics}
J.Y. Campbell, A.W. Lo, A.C. MacKinlay, and R.F. Whitelaw.
\newblock {The econometrics of financial markets}.
\newblock {\em Macroeconomic Dynamics}, 2(04):559--562, 1998.

\bibitem{chan1992empirical}
K.C Chan, G.A. Karolyi, F.A. Longstaff, and A.B. Sanders.
\newblock {An empirical comparison of alternative models of the short-term
  interest rate}.
\newblock {\em The journal of finance}, 47(3):1209--1227, 1992.

\bibitem{friedman1976}
A.~Friedman.
\newblock {\em {Stochastic differential equations and applications}}.
\newblock Academic Press, 1976.

\bibitem{gard1988introduction}
T.C. Gard.
\newblock {\em {Introduction to Stochastic Differential Equations}}.
\newblock Marcel Dekker, New York, 1988.

\bibitem{giles2008multilevel}
M.B. Giles.
\newblock {Multilevel monte carlo path simulation}.
\newblock {\em Operations Research-Baltimore}, 56(3):607--617, 2008.

\bibitem{hale1993introduction}
J.K. Hale and S.M.V. Lunel.
\newblock {\em {Introduction to Functional Differential Equations}}.
\newblock Springer Verlag, 1993.

\bibitem{heston1997simple}
S.L. Heston.
\newblock {A simple new formula for options with stochastic volatility}.
\newblock {\em Course notes of Washington University in St. Louis, Missouri},
  1997.

\bibitem{higham2000stability}
D.J. Higham.
\newblock {A-stability and stochastic mean-square stability}.
\newblock {\em BIT Numerical Mathematics}, 40(2):404--409, 2000.

\bibitem{higham2001mean}
D.J. Higham.
\newblock {Mean-square and asymptotic stability of the stochastic theta
  method}.
\newblock {\em SIAM Journal on Numerical Analysis}, pages 753--769, 2001.

\bibitem{higham2005convergence}
D.J. Higham and X.~Mao.
\newblock {Convergence of Monte Carlo simulations involving the mean-reverting
  square root process}.
\newblock {\em Journal of Computational Finance}, 8(3):35, 2005.

\bibitem{higham2003strong}
D.J. Higham, X.~Mao, and A.M. Stuart.
\newblock {Strong convergence of Euler-type methods for nonlinear stochastic
  differential equations}.
\newblock {\em SIAM Journal on Numerical Analysis}, 40(3):1041--1063, 2002.

\bibitem{higham2003exponential}
D.J. Higham, X.~Mao, and A.M. Stuart.
\newblock {Exponential mean-square stability of numerical solutions to
  stochastic differential equations}.
\newblock {\em LMS J. Comput. Math}, 6:297--313, 2003.

\bibitem{higham2008almost}
D.J. Higham, X.~Mao, and C.~Yuan.
\newblock {Almost sure and moment exponential stability in the numerical
  simulation of stochastic differential equations}.
\newblock {\em SIAM Journal on Numerical Analysis}, 45(2):592--609, 2008.

\bibitem{hu1996semi}
Y.~Hu.
\newblock {Semi-implicit Euler-Maruyama scheme for stiff stochastic equations}.
\newblock {\em Progress in Probability}, pages 183--202, 1996.

\bibitem{hutzenthaler2010strong}
M.~Hutzenthaler, A.~Jentzen, and P.E. Kloeden.
\newblock Strong convergence of an explicit numerical method for sdes with
  non-globally lipschitz continuous coefficients.
\newblock {\em to appear in The Annals of Applied Probability}, 2010.

\bibitem{hutzenthaler2011strong}
M.~Hutzenthaler, A.~Jentzen, and P.E. Kloeden.
\newblock {Strong and weak divergence in finite time of Euler's method for
  stochastic differential equations with non-globally Lipschitz continuous
  coefficients}.
\newblock {\em Proceedings of the Royal Society A: Mathematical, Physical and
  Engineering Science}, 467(2130):1563, 2011.

\bibitem{chas1980stochastic}
R.Z. Khasminski.
\newblock {\em {Stochastic Stability of Differential Equations}}.
\newblock Kluwer Academic Pub, 1980.

\bibitem{kloeden2007pathwise}
P.E. Kloeden and A.~Neuenkirch.
\newblock {The pathwise convergence of approximation schemes for stochastic
  differential equations}.
\newblock {\em Journal of Computation and Mathematics}, 10:235--253, 2007.

\bibitem{kloeden1992numerical}
P.E. Kloeden and E.~Platen.
\newblock {\em {Numerical Solution of Stochastic Differential Equations}}.
\newblock Springer, 1992.

\bibitem{lasalle1968stability}
J.P. LaSalle.
\newblock {Stability theory for ordinary differential equations}.
\newblock {\em J. Differential Equations}, 4(1):57--65, 1968.

\bibitem{lasalle1976stability}
J.P. LaSalle and Z.~Artstein.
\newblock {\em {The Stability of Dynamical Systems}}.
\newblock Society for Industrial Mathematics, 1976.

\bibitem{lewis2000option}
A.L. Lewis.
\newblock {\em {Option Valuation Under Stochastic Volatility}}.
\newblock Finance Press, 2000.

\bibitem{liptser1989theory}
R.S. Liptser and A.N. Shiryayev.
\newblock {\em {Theory of Martingales}}.
\newblock Kluwer Academic Publishers, 1989.

\bibitem{lucia2002electricity}
J.J. Lucia and E.S. Schwartz.
\newblock Electricity prices and power derivatives: Evidence from the nordic
  power exchange.
\newblock {\em Review of Derivatives Research}, 5(1):5--50, 2002.

\bibitem{mannella1999absorbing}
R.~Mannella.
\newblock {Absorbing boundaries and optimal stopping in a stochastic
  differential equation}.
\newblock {\em Physics Letters A}, 254(5):257--262, 1999.

\bibitem{mao1999stochastic}
X.~Mao.
\newblock {Stochastic versions of the LaSalle theorem}.
\newblock {\em Journal of Differential Equations}, 153(1):175--195, 1999.

\bibitem{mao2007stochastic}
X.~Mao.
\newblock {\em {Stochastic Differential Equations and Applications}}.
\newblock Horwood Pub Ltd, 2007.

\bibitem{mao2002environmental}
X.~Mao, G.~Marion, and E.~Renshaw.
\newblock {Environmental Brownian noise suppresses explosions in population
  dynamics}.
\newblock {\em Stochastic Process. Appl}, 97(1):95--110, 2002.

\bibitem{mao2003asymptotic}
X.~Mao, S.~Sabanis, and E.~Renshaw.
\newblock {Asymptotic behaviour of the stochastic Lotka--Volterra model}.
\newblock {\em Journal of Mathematical Analysis and Applications},
  287(1):141--156, 2003.

\bibitem{szpruch-diss}
X.~Mao and L.~Szpruch.
\newblock {Strong convergence rates for backward Euler–Maruyama method for
  non-linear dissipative-type stochastic differential equations with
  super-linear diffusion coefficients}.
\newblock {\em to appear in Stochastics}, 2012.

\bibitem{mao2006stochastic}
X.~Mao and C.~Yuan.
\newblock {\em {Stochastic Differential Equations with Markovian Switching}}.
\newblock Imperial College Press, 2006.

\bibitem{mao2007approximations}
X.~Mao, C.~Yuan, and G.~Yin.
\newblock {Approximations of Euler--Maruyama type for stochastic differential
  equations with Markovian switching, under non-Lipschitz conditions}.
\newblock {\em Journal of Computational and Applied Mathematics},
  205(2):936--948, 2007.

\bibitem{mattingly2002ergodicity}
J.C. Mattingly, A.M. Stuart, and DJ~Higham.
\newblock Ergodicity for sdes and approximations: locally lipschitz vector
  fields and degenerate noise.
\newblock {\em Stochastic processes and their applications}, 101(2):185--232,
  2002.

\bibitem{pang-asymptotic}
S.~Pang, F.~Deng, and X.~Mao.
\newblock {Asymptotic properties of stochastic population dynamics}.
\newblock {\em Dynamics of Continuous, Discrete and Impulsive Systems Series
  A}, 15:603--620, 2008.

\bibitem{shen2006improved}
Y.~Shen, Q.~Luo, and X.~Mao.
\newblock {The improved LaSalle-type theorems for stochastic functional
  differential equations}.
\newblock {\em Journal of Mathematical Analysis and Applications},
  318(1):134--154, 2006.

\bibitem{szpruchnumerical}
L.~Szpruch, X.~Mao, D.J. Higham, and J.~Pan.
\newblock {Numerical simulation of a strongly nonlinear Ait-Sahalia-type
  interest rate model}.
\newblock {\em BIT Numerical Mathematics}, pages 1--21.

\end{thebibliography}

\end{document}